\definecolor{trueblue}{rgb}{0.0, 0.45, 0.81}
\definecolor{truegreen}{rgb}{0.13, 0.55, 0.13}
\newcommand{\eps}{\varepsilon} 
\newcommand{\dx}{\, {\rm d}x}
\theoremstyle{plain}
\newtheorem{theorem}{Theorem}[section]
\newtheorem{lemma}[theorem]{Lemma}
\newtheorem{remark}[theorem]{Remark}
\newtheorem{proposition}[theorem]{Proposition}
\newenvironment{step}[1]{\underline{Step #1}.}{}
\theoremstyle{definition}
\renewcommand{\tilde}{\widetilde}
\renewcommand{\d}{ \mathrm{d}}
\DeclareMathOperator{\dist}{dist}
\DeclareMathOperator{\Per}{Per}
\numberwithin{equation}{section}
\newcommand{\N}{\mathbb{N}}
\newcommand{\E}{\mathcal{E}}
\newcommand{\R}{\mathbb{R}}
\renewcommand{\S}{\mathbb{S}}
\renewcommand{\L}{\mathcal{L}}
\renewcommand{\H}{\mathcal{H}}
\newcommand{\mres}{\mathbin{\vrule height 1.6ex depth 0pt width 0.13ex\vrule height 0.13ex depth 0pt width 1.3ex}}
\def\sgn{{\rm sgn}}
\newcommand{\GGG}{\color{black}}
\def \d{\mathrm{d}}
\begin{document}
	
\title[Energy barriers for boundary nucleation in a two-well model]{Energy barriers for boundary nucleation in a two-well model without gauge invariances}
	
\author[Antonio Tribuzio]{Antonio Tribuzio} 
\address[Antonio Tribuzio]{Institute for Applied Mathematics, University of Bonn, Endenicher Allee 60
53115 Bonn, Germany}
\email{tribuzio@iam.uni-bonn.de}

\author{Konstantinos Zemas}
\address[Konstantinos Zemas]{Institute for Applied Mathematics, University of Bonn, Endenicher Allee 60
	53115 Bonn, Germany}
\email{zemas@iam.uni-bonn.de}
	
\begin{abstract}
We study energy scaling laws for a simplified, singularly perturbed, double-well nucleation problem confined in a half-space, in the absence of gauge invariance and for an inclusion of fixed volume. Motivated by models for boundary nucleation of a single-phase martensite inside a parental phase of austenite, our main focus in this nonlocal isoperimetric problem is how the relationship between the rank-1 direction and the orientation of the half-space influences the energy scaling with respect to the fixed volume of the inclusion. Up to prefactors depending on this relative orientation, the scaling laws coincide with the corresponding ones for bulk nucleation \cite{knupfer2011minimal} for all rank-1 directions, \textit{but} the ones normal to the confining hyperplane, where the scaling is as in a three-well problem in full space, resulting in a lower energy barrier \cite{Tribuzio-Rueland_1}. 
\end{abstract} 
	
\maketitle
	
\section{Introduction}\label{introduction}

In its classical formulation, crystal \emph{nucleation} is the formation of small regions of new thermodynamic phases, for instance in solid materials, that can evolve into macroscopic structures.
In this context, nucleation is usually initiated from a state of \emph{metastability} by a change of temperature or the application of an external load.
From a modeling point of view, critical nuclei can be seen as solutions of minimisation problems involving the competition between a bulk term, favouring the presence of the new phase, and an interfacial energy which penalises interactions between two different phases.
Analysing the energy contributions of the optimal nuclei in terms of the parameters of the system, as \emph{e.g.}\ the volume of the inclusion, helps in quantifying the energy barrier of the phase transformation or the saddle point (between the two phases) of the energy landscape.
Nucleation has been observed as the initiating phenomenon of \emph{martensitic phase transformations} in \emph{shape-memory alloys}, which received a lot of attention in the recent years.
These are materials that, \emph{e.g.},\ during a cooling process, change their crystalline structure from a highly symmetric (\emph{austenite} phase) to a less symmetric one (\emph{martensite} phase).
In the latter, the material may also present different, energetically equivalent crystalline configurations which may lead to the formation of microstructures.
For these transformations, it is not yet completely understood whether (and in which cases) \emph{heterogeneous nucleation} (\emph{i.e.},\ the formation of inclusion emanating from the boundary of the sample) is more favourable than the \emph{homogeneous} one (\emph{i.e.},\ nucleation in the bulk of the sample).

In this paper we contribute in the understanding of the above question, by studying scaling laws for minimal inclusions of a simplified (\emph{i.e.,}\ without rotation or skew invariance), singularly perturbed, double-well energy which models internal energy of martensitic materials.
The inclusions are confined in a half-space, modeling boundary nucleation in shape-memory alloys, and the energy contribution is quantified in terms of the volume of the inclusion $V$ and the strength of the interfacial energy $\eps$ (which can be seen also as a unit length-scale).
We show that, when the two wells are rank-$1$ connected in the direction orthogonal to the boundary hyperplane of the constraint, nucleation is more favourable at the boundary, \emph{i.e.},\ the energy scaling is smaller than that of the unconstrained problem.

To the authors' knowledge, this is the first result in the context of solid-solid phase transitions where energy barriers for boundary nucleation are rigorously obtained, and such a dichotomy in the scaling behavior is observed.

We make also quantitative observations on the separation between the regimes of heterogeneous and homogeneous nucleation, which is governed by the angle between the normal to the boundary of the constraint and the direction of the rank-$1$ connection.

\subsection{Variational theory of martensitic materials}
Before introducing our model, we briefly recall the usual setting of variational models for shape-memory alloys.
A mathematical treatment of martensitic materials, in the context of nonlinear elasticity, has been proposed by Ball and James \cite{BJ87} (see also the notes \cite{M99,B03}).
Given a deformation $u:\Omega\to\R^d$ with reference domain $\Omega\subseteq\R^d$, a prototypical model for the elastic energy associated to $u$ has the form
$$
\int_\Omega \dist^2(\nabla u(x),K)\, \mathrm{d}x\,,
$$
where $K\subset\R^{d\times d}$ is the set of matrices that describe the preferred states of the material and, in general, depends on the temperature.
The quadratic growth of the energy density is suggested by Hooke's law.

In the geometrically nonlinear setting, where rotational invariance under the full group $SO(d)$ is encorporated in the model, the energy-well typically takes the form 
\[K=\bigcup_{i=1}^m SO(d)U_i\,,\]
where $\{U_i\}_{i=1}^m$ are the corresponding stress-free strains for the different phases. The problem of finding all (Lipschitz) solutions $u:\Omega\mapsto \R^d$ to the inclusion
\[\nabla u\in K\] 
is usually referred to as the \textit{$m$-well problem}, and corresponds to finding deformations with zero elastic energy, often called \emph{exact solutions}.
The simplest possible non-trivial solutions are the so called \textit{simple laminates}. The first rigidity result in this direction was by Dolzmann and Müller \cite{DM96}, who, for the nonlinear two-well problem $(m=2)$, showed that if the set where $\nabla u$ is in one of the wells has finite perimeter, then the only exact solutions are in fact simple laminates. Without this assumption, counterexamples via convex integration techniques can be constructed \cite{MS96, MS1999, MS2003}. The result of \cite{DM96} was generalised by Kirchheim \cite{Kirchheim} to the case of three wells.  In \cite{DM96} it is also proven that in the geometrically linearised three-well problem the rigidity holds, without any assumptions on the perimeter of the transition. 

In the study of microstructures in shape memory alloys though, the minimisation of the elastic energy alone cannot give quantitative information, since the continuum model lacks of a natural scale.
For this reason, and also to reduce the degeneracy of the elastic energy, due to its multi-well nature, it is customary to add a higher order term \cite{KM92,KM94,CC15} which penalises transitions between different states and introduce a length-scale to the problem. With the consideration of this extra energy term various quantitative results have been proven, cf. for instance \cite{Lorent, CS2006, JL, CO12, CC15, DV20}; let $\eps>0$ be a small parameter taking the role of a unit length-scale, it is often considered the total energy
$$
\int_\Omega \dist^2(\nabla u(x),K)\, \mathrm{d}x + \eps |D^2 u|(\Omega)
$$
to be a good model for the energy of the system.
In the formula above, the penalisation term is intended as the total variation of the second distributional derivative of $u$.

We are interested in studying minimal inclusions, confined in a half-space and of prescribed volume $V>0$, of a single-state martensite (with preferred strain $F\in\R^{d\times d}$) into the austenite state, which is represented by the zero matrix.
In the nucleation regime, the austenite and martensite states are assumed to have the same energetic cost.
It is customary \cite{CO12, knupfer2011minimal,KKO13} to introduce a geometric variable $\chi\in BV(\R_+^d;\{0,1\})$ whose support represents the region of transformed material, while in the region where $\chi=0$, the material is still in the austenite phase.
We reduce then to the following minimisation problem
\begin{equation}\label{eq:intro1}
E_\eps (V) := \inf_{\substack{\chi\in BV(\R_+^d;\{0,1\}) \\ \|\chi\|_{L^1(\R_+^d)}=V}} \Big\{ \inf_{u\in H^1(\R^d_+;\R^d)} \int_{\R^d_+} |\nabla u - \chi F|^2\, \mathrm{d}x + \eps |D\chi|(\R^d_+) \Big\}\,,
\end{equation}
where we have neglected frame indifference or other gauge-invariances, namely the set of stress-free states is simply $K=\{0,F\}$.
For the sake of keeping the presentation simple, in the introduction we consider only the case in which the constraint is given by $\R^d_+=\{x\in\R^d : x_d>0\}$, but all the other half-spaces are considered in the sequel, obtaining analogous results.

Problems as \eqref{eq:intro1} have been studied mainly in the full-space case.
In particular, we name two works that are strongly connected to the results proved in this article.

\smallskip

\noindent{\em Inclusions for a two-well (geometrically linearized) energy:} in \cite{knupfer2011minimal} the scaling behaviour of nucleation of geometrically linearized (uniform) martensite into austenite is studied in $\R^d$.
The authors considered the quantity
\begin{equation*}\label{eq:intro2-KK}
E^{\rm(sym)}_\eps (V) := \inf_{\substack{\chi\in BV(\R^d;\{0,1\}) \\ \|\chi\|_{L^1(\R^d)}=V}} \Big\{ \inf_{u\in H^1(\R^d;\R^d)} \int_{\R^d} |\nabla^{\rm sym} u - \chi F|^2\, \mathrm{d}x + \eps |D\chi|(\R^d) \Big\}\,,
\end{equation*}
where $\nabla^{\mathrm{sym}}u:=\frac{1}{2}(\nabla u+(\nabla u)^t)$, and proved that when $F$ and $0$ are \emph{strain connected}, \emph{i.e.},\ $F=\frac{1}{2}(n_1\otimes n_2+n_2\otimes n_1)$ for some $n_1,n_2\in\S^{d-1}$, for large volumes $V>\eps^d$, the energy of minimal inclusions scales according to the following law;
\begin{equation}\label{eq:intro3-KKlaw}
E^{\rm(sym)}_\eps(V) \sim \eps^\frac{d}{2d-1}V^\frac{2d-2}{2d-1},
\end{equation}
see \cite[Theorem 2.1]{knupfer2011minimal}.
A construction attaining the optimal scaling law is a thin lens-shaped inclusion which is oriented orthogonally to either $n_1$ or $n_2$.
The gauge-free analogue of this problem has been proven to be equivalent in terms of scaling in \cite{Tribuzio-Rueland_1}.

\smallskip

\noindent{\em Inclusions for three-well energy:} the second result we should mention is also from \cite{Tribuzio-Rueland_1}.
There, a gauge-free three-well problem in $\R^d$ is treated, modelling inclusion of a double-state martensite into austenite; now $\chi$ possibly attains three values $0,1$ and $-1$, meaning that the stress-free states are now $0, F$ and $-F$.
For rank-$1$ matrices $F$, the scaling of
\begin{equation*}\label{eq:intro4-RT}
E^{(3)}_\eps(V) := \inf_{\substack{\chi\in BV(\R^d;\{0,\pm1\}) \\ \|\chi\|_{L^1(\R^d)}=V}} \Big\{ \inf_{u\in H^1(\R^d;\R^d)} \int_{\R^d} |\nabla u - \chi F|^2\, \mathrm{d}x + \eps |D\chi|(\R^d) \Big\}
\end{equation*}
is studied.
Here, internal lamination between $F$ and $-F$ drops the energy barrier with respect to that of uniform martensite inclusions, namely
\begin{equation}\label{eq:intro5-RTlaw}
E^{(3)}_\eps(V) \sim \eps^\frac{2d}{3d-1} V^\frac{3d-3}{3d-1} \ll E^{\rm(sym)}_\eps (V)\,,
\end{equation}
again in the \textit{large volume regime} $V>\eps^d$ (see \cite[Corollary 1.1]{Tribuzio-Rueland_1}). Let us mention that in both cases, in the \textit{small volume regime} $V\leq \eps^d$, interfacial energy dominates and hence both problems behave (in terms of scaling) as the classical isoperimetric problem, resulting in an energy barrier of the form $\eps V^{\frac{d-1}{d}}$.

\smallskip

\noindent{\em Description of the main result:} Our main result (cf.\ Theorem \ref{main_theorem_energy_scaling}) has to be intended as an \textit{interpolation} of the scaling laws \eqref{eq:intro3-KKlaw} and \eqref{eq:intro5-RTlaw} for the problem \eqref{eq:intro1}:
indeed, if the direction of the rank-$1$ connection between $F$ and $0$, denoted by $n\in\S^{d-1}$, is not parallel to $e_d$, then for large enough volumes, \[E_\eps(V)\sim\eps^\frac{d}{2d-1} V^\frac{2d-2}{2d-1}\,,\]
if instead $n=\pm e_d$, then
$$
E_\eps(V)\sim\eps^\frac{2d}{3d-1} V^\frac{3d-3}{3d-1}.
$$
In the former case, the energy barrier is up to prefactors depending on the relative orientation of the rank-$1$ direction and the confining hyperplane.
In the latter case, the lack of the \emph{twin} state $-F$ is covered by the presence of the constraint, suitably oriented.
This dichotomy is in fact made quantitative in terms of $\eps$ and $V$ in the upper bound construction of Subsection \ref{sec:4.2}, in the sense that if the minimal angle between $n$ and $e_d$ is below a critical threshold, then the energy barrier scales as in \eqref{eq:intro3-KKlaw}, otherwise as in \eqref{eq:intro5-RTlaw}.
For further discussions about this dichotomy, see Section \ref{setting_main_result} below.

\subsection{Relations to the literature and other quantitative results}

From a mathematical point of view, \eqref{eq:intro1} is a variant of the classical isoperimetric problem (in half-space) with the presence of a nonlocal and strongly anisotropic bulk term.
Indeed, \eqref{eq:intro1} reads as
$$
E_\eps(V) := \inf\Big\{E_{el}(E)+\Per(E;\R^d_+) : E\subseteq\R^d_+, \mathcal{L}^d(E)=V\Big\}\,,
$$
where the elastic energy term
$$
E_{el}(E) := \inf_{u\in H^1(\R^d_+;\R^d)} \int_{\R^d_+}|\nabla u- \chi_E F|^2\, \mathrm{d}x
$$
penalises misorientations of the outer normal vector to the boundary of $E$ from $n$.
Existence and fine properties of minimisers for closely related functionals have been studied in \cite{KN18,KS22}, for nucleation of ferromagnetic domains. 
Similar variants of these isoperimetric problems related to liquid drop models, with a nonlocal (Coulomb type) bulk term have been studied for instance in \cite{CS12,KnuMur13,KnuMur14,NP21}.

In the context of shape-memory alloys, apart from the works \cite{knupfer2011minimal,Tribuzio-Rueland_1} that are our main references, nucleation processes have been studied for the geometrically linearised cubic-to-tetragonal phase transformation both in the local \cite{CO12} and in the global case \cite{KKO13,KO18}.
Recently a two-dimensional, geometrically nonlinear analogue of \cite{knupfer2011minimal} has been analysed in \cite{AKKR23}.
Scaling laws for boundary (corners) nucleation in the cubic-to-tetragonal phase transformation has been studied in \cite{BG13}.
The emergence of boundary nucleation is also interrogated in \cite{BallKoumatosSeiner13,BallKoumatos16,James19}.
Other interesting related results are obtained, in a double-well model with an \emph{a priori} prescribed shape of the domain \cite{CDMZ20} or in a fixed domain with one of the variants having low volume fraction \cite{CZ16,CDZ17}.

The quantitative study of microstructures emerging in martensitic materials, through \emph{e.g.}\ scaling laws of multi-well singularly perturbed elastic energies, is nowadays a broad subject; starting from the pioneering results from Kohn and M\"uller \cite{KM92,KM94} it received an always increasing attention.
We give here a (far from being complete) list of references for the interested readers.
Different types of scaling laws have been obtained in fixed domains with prescribed boundary conditions in nonlinear \cite{CC15} and geometrically linearised settings \cite{CO09,Rueland16}.
In these works, the complexity of the emerging microstructures, and thus of the scaling, may be affected by different factors as the order of lamination of the boundary conditions \cite{RT22Tartar,RT23ESAIM}, the geometry of the domain \cite{GZ23,G23+} or by the multiplicity of the connections between the wells \cite{CC15,RRT23,RRTT23+}.
Scaling laws can be used to detect finer properties of minimisers as asymptotic self-similarity \cite{Conti2000}, compactness \cite{Simon21} or maximal regularity \cite{RTZ19,RZZ20}.

In the end, we comment that related results have also been studied in other physical systems such as in compliance minimisation \cite{KW14,KW16} or micromagnetism \cite{CKO99}. \\[-13pt]

\smallskip

\textit{Plan of the paper}. After fixing some notation in the next subsection, in Section \ref{setting_main_result} we introduce in detail our model of study and state our main result, Theorem \ref{main_theorem_energy_scaling}. In Section \ref{sec: 3} we prove an auxiliary local lower bound for the elastic energy of a pair $(u,\chi)$ for an inclusion with small volume and interfacial energy, in the spirit of \cite[Proposition 3.1]{knupfer2011minimal}. In Section \ref{sec:4} we prove our main theorem in the \textit{non-degenerate case} corresponding to $n\neq \{\pm e_d\}$, by providing an \textit{ansatz-free lower bound} (based on the result of Section \ref{sec: 3} and a standard covering argument), and also a matching in terms of energy scaling, upper bound construction. Section \ref{sec:reflection} contains the proof in the \textit{degenerate case} $n=\{\pm e_d\}$, by means of a reflection argument and a direct relation of the resulting problem with a three-well problem in $\R^d$, for which the results of \cite{Tribuzio-Rueland_1} can be employed. Some elementary geometric calculations needed in Section \ref{sec: 3} are collected in Appendix \ref{sec: choice_of_cages}.
	\\[-10pt]
\subsection{Notation and Preliminaries}\label{Notation}
For $d\in \N, d\geq 2$, by $\mathcal{B}(\R^d)$ and $\mathcal{M}(\R^d)$ we denote the $\sigma$-algebra of Borel and Lebesgue measurable subsets of $\R^d$ respectively. By the symbols $\cdot$ and $|\, |$ we denote the Euclidean inner product and norm in $\R^d$ with respect to the canonical basis $\{e_1,\dots,e_d\}$ respectively. The corresponding notions in the space of $\R^{d\times d}$-matrices are denoted by $\colon$ and $|\,|$. For every $\xi\in \S^{d-1}$ we denote 
\begin{equation}\label{eq: pi_nu}
\Pi_\xi:=\{x\in \R^d\colon x\cdot \xi= 0\} \ \ \text{ and } \ \ \Pi_{\xi,\pm}:=\{x\in\R^d\colon \pm x\cdot \xi> 0\}\,,
\end{equation}
to be respectively the hyperplane and upper$\slash$lower-half space with respect to the outward pointing unit normal $\xi$. With this notation we have $\R^d_{\pm}=\Pi_{e_d,\pm}$.
Given $\rho>0$ and $x_0\in \R^d$, we denote $B_\rho^{\pm}(x_0):=B_\rho(x_0)\cap \R^d_{\pm}\,.$ 
For a function $f\in L^1(\R_+^d)$ we denote by $\mathrm{spt}(f)$ the support of $f$, \textit{i.e.},
\[\mathrm{spt}(f):=\overline{\{x\in \R_+^d\colon f(x)\neq 0\}}\,.\]
Given $A, B\subset \R^d$ we write $\chi_A$ for the indicator function of $A$, \textit{i.e.}, 
\[\chi_A(x):=\begin{cases} 1\,, \quad \text{if } x\in A\,\\
0\,,  \quad \text{if } x\notin A\,,
\end{cases}\]
and $A \subset \subset B$ iff $\overline{A} \subset B$. For $d,k\in \mathbb{N}$ we denote by $\mathcal{L}^d$ and $\mathcal{H}^{k}$ the $                                                                                                                                                                                                                                                                                                                                                                                                                                                                                                                                                                                                                                                                                                                                                                                                                                                                                                                                                                                                                                                                                                                                                                                                                                                                                                                                                                                                                                                                                                                                                                                                                                                                                                                                                                                                                                                                                                                                                                                                                                                                                                                                                                                                                                                                                                                                                                                                                                                                                                                                                                                                                                                                                                                                                                                                                                                                                                                                                                                                                                                                                                                                                                                                                                                                                                                                                                                                                                                                                                                                                                                                                                                                                                                                                                                                                                                                                                                                                                                                                                                                                                                                                                                                                                                                                                                                                                                                                                                                                                                                                                                                                                                                                                                                                                                                                                                                                                                                                                                                                                                                                                                                                                                                                                                                                                                                                                                                                                                                                                                                                                                                                                                                                                                                                                                                                                                                                                                                                                                                                                                                                                                                                                                                                                                                                                                                                                                                                                                                                                                                                                                                                                                                                                                                                                                                                                                                                                                                                                                                                                                                                                                                                                                                                                                                                                                                                                                                                                                                                                                                                                                                                                                                                                                                                                                                                                                                                                                                                                                                                                                                                                                                                                                                                                                                                                                                                                                                                                                                                                                                                                                                                                                                                                                                                                                                                                                                                                                                                                                                                                                                                                                                                                                                                                                                                                                                                                                                                                                                                                                                                                                                                                                                                                                                                                                                                                                                                                                                                                                                                                                                                                                                                                                                                                                                                                                                                                                                                                                                                                                                                                                                                                                                                                                                                                                                                                                                                                                                                                                                                                                                                                                                                                                                                                                                                                                                                                                                                                                                                                                                                                                                                                                                                                                                                                                                                                                                                                                                                                                                                                                                                                                                                                                                                                                                                                                                                                                                                                                                                                                                                                                                                                                                                                                                                                                                                                                                                                                                                                                                                                                                                                                                                                                                                                                                                                                                                                                                                                                                                                                                                                                                                                                                                                                                                                                                                                                                                                                     d$-dimensional Lebesgue measure and the $k$-dimensional Hausdorff measure, respectively. Moreover, we use the notation $\omega_d:=\L^d(B_1)$ for the volume of the unit ball in $\R^d$. 
For every $x,y\in \R^d$ we denote by $[x,y]$ the directed segment in $\R^d$ with endpoints $x$ and $y$, and for every $A\subset \R^d$ we denote by $\mathrm{conv}(A)$ its convex hull, \textit{i.e.}, the smallest (with respect to set inclusion) convex set in $\R^d$ that contains $A$.

By $\sim_{M_1,M_2,\dots}, \lesssim_{M_1,M_2,\dots}$ we mean that the corresponding equality or inequality is valid up to a constant that depends only on the parameters $M_1, M_2,\dots$ and the dimension, but is allowed to vary from line to line. By universal constant we mean a constant that depends at most on the dimension. We will also use the notation $f\sim g$ whenever there exist two universal constants $0<c\leq C<+\infty$, so that $$c g\leq f \leq C g\,.$$ The symbols $\ll$ and $\gg$ mean that the corresponding estimate requires a small universal constant, for example, if we say that $f\lesssim g$ for $0<\eps\ll 1$, this will mean that there exist a universal constant $C>0$ and a sufficiently small universal constant $\eps_0\in (0,1)$, so that 
\[f\leq Cg\, \quad \forall \eps\in (0,\eps_0]\,.\]
Given a Lebesgue measurable function $f$ and a bounded set $E\subset \mathcal{M}(\R^d)$, we denote 
\[
\langle f\rangle_E:=\fint_{E}f:=\begin{cases}\frac{1}{\mathcal{L}^d(E)}\int_{E}f\,\mathrm{d}x\, \quad \mathrm{if }\ \L^d(E)>0\,,\\[4pt]
0\, \ \qquad \qquad \qquad \mathrm{if }\ \L^d(E)=0\,.	
\end{cases}
\]
If $U\subset \R^d$ is open and $u\in BV(U)$ (cf. \cite[Chapter 3]{Ambrosio-Fusco-Pallara:2000} for the definition and a systematic treatment of the subject), we denote by $|Du|(U):=\int_U\,\d|Du|$ its total variation in $U$.  
The set of rank-$1$ matrices is denoted by
\begin{equation}\label{eq:_compatible_strains}
\mathcal{R}_1(d):=\Big\{F\in \R^{d\times d}\colon F=b\otimes n\,\ \text{for some } b\in \R^d\setminus \{0\}\,,\ n\in \S^{d-1}\Big\}\,,
\end{equation} 
where the tensor product is defined componentwise, \textit{i.e.}, $(b\otimes n)_{ij}:=b_in_j \ \forall i,j=1,\dots,d$. 
We also set as usual
\[SO(d):=\big\{R\in \R^{d\times d}\colon \ R^tR=I_d\, \ \text{and } \mathrm{det}R=1\big\}\,,\]	
to be the special orthogonal group in $d$ dimensions.
	
\section{Setting and statement of the main result}\label{setting_main_result} 
Given $\eps>0$, $\xi\in \S^{d-1}$ and $F=b\otimes n\in \mathcal{R}_1(d)$, we consider the energy 
\begin{equation}\label{def: eps_energy_on_pairs}
\mathcal{E}_{\eps}(u,\chi ):=\int_{\Pi_{\xi,+}}\big|\nabla u-\chi F\big|^2\,\mathrm{d}x+\eps |D\chi|(\Pi_{\xi,+})\,,
\end{equation}
where we recall the notation in  \eqref{eq: pi_nu}, \eqref{eq:_compatible_strains}. 
The two preferred phases are represented by $0_{d\times d}$ (the \textit{majority$\backslash$austenite phase}) and $F$ (the \textit{minority$\backslash$martensite phase}). The function $u\in H^1(\Pi_{\xi,+}; \R^d)$ represents the elastic displacement, so that the first term in \eqref{def: eps_energy_on_pairs} represents the corresponding linearised elastic energy, neglecting any gauge invariance in this toy model. 
	
The two-valued function $\chi\in BV(\Pi_{\xi,+};\{0,1\})$ describes the region occupied by the minority phase. By the general theory for sets of finite perimeter (cf. \cite[Chapter 3]{Ambrosio-Fusco-Pallara:2000}), the set 
\begin{equation}\label{eq: jump_set_of_chi}
J_\chi:=(\partial^*\{\chi=1\})\cap \Pi_{\xi,+}
\end{equation} is $(d-1)$-rectifiable, and if $\nu_\chi$ denotes its measure-theoretical outward pointing unit normal, then
$$D\chi=(\chi^+-\chi^-) \nu_\chi\mathcal{H}^{d-1}\mres J_\chi\,,$$
where $\chi^{\pm}$ denote as usual the measure theoretic traces (taking values 0 or 1 in this case).  
	
In particular, prescribing the volume $V>0$ of the inclusion, the set of admissible configurations is defined as 
\begin{equation}\label{admissible_configurations}
\mathcal{A}_{\xi}(V):=\Big\{(u,\chi)\in H^1(\Pi_{\xi,+};\R^d)\times BV(\Pi_{\xi,+};\{0,1\})\colon \int_{\Pi_{\xi,+}}\chi\,\mathrm{d}x=V\Big\}\,,
\end{equation}
and the minimal energy for an inclusion at fixed volume $V>0$ is defined as
\begin{equation}\label{minimal_energy_fixed_volume}
E_\eps(V):=\inf_{\chi}\big[\inf_{u} \mathcal{E}_{\eps}(u,\chi)\big] \ \ \text{among all } (u,\chi)\in \mathcal{A}_{\xi}(V)\,.
\end{equation}
With these definitions, the main result of this paper is the following.
	
\begin{theorem}[\bf Scaling of the nucleation barrier]\label{main_theorem_energy_scaling}
Let $d\in\N$, $d\geq 2$, let $F=b\otimes n$ for some $b\in\R^d\setminus\{0\}$ and $n\in\S^{d-1}$, $\eps,V>0$ and let $E_\eps(V)$ be as in \eqref{minimal_energy_fixed_volume}. 
Then we have the following dichotomy: 
\begin{itemize}
\item[$\rm{(i)}$] For every $n\in \S^{d-1}\setminus\{\pm \xi\}$, we have 
\begin{equation}\label{eq: generic_energy_scaling}
E_\eps(V)\sim_{n,\xi} \begin{cases} \eps V^{\frac{d-1}{d}}\quad \quad \quad \quad \quad \quad \quad   \text{if } V\leq \eps^d|F|^{-2d}\,,\\[4pt]
\eps^{\frac{d}{2d-1}}|F|^{\frac{2d-2}{2d-1}}V^{\frac{2d-2}{2d-1}}\ \ \ \ \ \text{if } V\geq \eps^d|F|^{-2d}\,.
\end{cases}
\end{equation}
\item[$\rm{(ii)}$] For $n=\pm \xi$, we have 
\begin{equation}\label{eq: specific_direction_energy_scaling}
E_\eps(V)\sim \begin{cases}
 \eps V^{\frac{d-1}{d}}\quad \quad \quad \quad \quad \quad \quad   \text{if } V\leq \eps^d|F|^{-2d}\,,\\[4pt]
\eps^{\frac{2d}{3d-1}}|F|^{\frac{2d-2}{3d-1}}V^{\frac{3d-3}{3d-1}}\ \ \ \ \ \text{if } V\geq \eps^d|F|^{-2d}\,.
\end{cases}
\end{equation}
\end{itemize}
\end{theorem}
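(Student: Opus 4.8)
The plan is to treat the two regimes and the two cases separately, following the philosophy that the small-volume regime is isoperimetric-dominated and the large-volume regime is where the elastic anisotropy and the half-space constraint interact. For the small-volume regime $V \le \eps^d |F|^{-2d}$ in both (i) and (ii), I would argue that the interfacial term dominates: the upper bound $E_\eps(V) \lesssim \eps V^{(d-1)/d}$ comes from placing a half-ball $B_\rho^+(x_0)$ of volume $V$ against the hyperplane $\Pi_\xi$ (so that only half the perimeter is paid) and choosing $u$ to be the obvious piecewise-affine extension, whose elastic energy is of order $|F|^2 V$; one checks $|F|^2 V \lesssim \eps V^{(d-1)/d}$ exactly when $V \lesssim \eps^d |F|^{-2d}$. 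For the matching lower bound $E_\eps(V) \gtrsim \eps V^{(d-1)/d}$, I would reflect the configuration across $\Pi_\xi$ to double the domain (the reflected elastic energy only decreases the relevant competitor) and apply the relative isoperimetric inequality in $\R^d$ together with the trivial bound $E_\eps(V) \ge \eps |D\chi|(\Pi_{\xi,+})$, using that $\Per$ controls $V^{(d-1)/d}$; alternatively one uses the local lower bound of Section~\ref{sec: 3} at the single length scale $V^{1/d}$.

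For the large-volume regime in the non-degenerate case (i), $n \notin \{\pm\xi\}$, the lower bound is the ansatz-free one promised in Section~\ref{sec:4}: I would cover $\spt(\chi)$ by balls at an appropriately chosen mesoscopic length scale $\ell$ (to be optimized, with $\ell \sim \eps^{?} V^{?}$ of the right homogeneity), apply the local elastic lower bound of Proposition in Section~\ref{sec: 3} on each ball where the volume fraction of $\chi$ is, say, between $1/4$ and $3/4$, and combine the elastic lower bounds on those "transition" balls with the interfacial term $\eps |D\chi|$ on the remaining part; a standard Fubini/slicing and covering argument then yields $E_\eps(V) \gtrsim_{n,\xi} \eps^{d/(2d-1)} |F|^{(2d-2)/(2d-1)} V^{(2d-2)/(2d-1)}$, the $n,\xi$-dependence entering through the geometry of the admissible "cages" from Appendix~\ref{sec: choice_of_cages}, which encode that the normal $n$ is not aligned with the constraint. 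For the matching upper bound I would use the Knüpfer--Kohn lens: a thin lens-shaped inclusion of volume $V$ with flat faces orthogonal to $b$ (or to $n$), placed in the interior of $\Pi_{\xi,+}$ far from the boundary, reproducing verbatim the construction of \cite{knupfer2011minimal} since the half-space constraint is inactive; this gives $E_\eps(V) \lesssim_{n,\xi} \eps^{d/(2d-1)} |F|^{(2d-2)/(2d-1)} V^{(2d-2)/(2d-1)}$, with prefactors depending on $n,\xi$ only through how obliquely one must cut the lens to keep it inside the half-space when $V$ is genuinely large relative to $\eps$.

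For the degenerate case (ii), $n = \pm\xi$, the key device is the reflection argument announced for Section~\ref{sec:reflection}. Given an admissible pair $(u,\chi)$ on $\Pi_{\xi,+}$ with $n = \xi$, I would reflect across $\Pi_\xi$: set $\tilde u(x) = R u(R x)$ and $\tilde\chi(x) = -\chi(Rx)$ on $\Pi_{\xi,-}$, where $R$ is the reflection, chosen so that $\nabla \tilde u - \tilde\chi F$ on the lower half-space is the mirror of $\nabla u - \chi F$; because $F = b \otimes \xi$ is rank-one in the reflection direction, the reflected displacement glues in $H^1$ across $\Pi_\xi$ and the reflected strain is exactly $-F$ times the reflected $\chi$. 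This produces an admissible competitor for the three-well problem $E^{(3)}_\eps(2V)$ in full space $\R^d$ with the same (doubled) energy, and conversely any symmetric competitor for the three-well problem restricts to one here; invoking the scaling law \eqref{eq:intro5-RTlaw} of \cite{Tribuzio-Rueland_1} (in both directions, after checking that near-minimizers of the three-well problem can be symmetrized without increasing the energy order, which for the upper bound just means taking the explicit three-well construction and reflecting it) gives $E_\eps(V) \sim \eps^{2d/(3d-1)} |F|^{(2d-2)/(3d-1)} V^{(3d-3)/(3d-1)}$ in the large-volume regime.

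The main obstacle I anticipate is the ansatz-free lower bound in case (i): one must show that no clever inclusion shape — in particular none exploiting the boundary — beats the Knüpfer--Kohn scaling, which requires the local elastic lower bound of Section~\ref{sec: 3} to be robust enough to survive the covering argument with $n,\xi$-uniform (though $n,\xi$-dependent) constants, and the geometric bookkeeping of the "cages" in Appendix~\ref{sec: choice_of_cages} is where the condition $n \ne \pm\xi$ must be quantitatively exploited so that the constant does not degenerate as $n \to \pm\xi$ (it is allowed to, and the theorem only claims $\sim_{n,\xi}$, but one still needs a positive constant for each fixed admissible $n$). A secondary subtlety is making the reflection in case (ii) genuinely reversible at the level of near-minimizers, i.e.\ that the three-well minimal energy is not strictly lowered by non-symmetric configurations in a way that would break the scaling equivalence — but since we only need matching upper and lower bounds up to universal constants, it suffices to symmetrize the \emph{upper-bound} construction of \cite{Tribuzio-Rueland_1} and, for the lower bound, to use that our reflected competitor is \emph{admissible} (not optimal) for the three-well problem, so $2E_\eps(V) \ge E^{(3)}_\eps(2V)$ directly.
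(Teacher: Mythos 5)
Your overall plan is aligned with the paper's: the same rescaling-and-reduction phase, a local elastic lower bound plus covering for case (i), and a reflection to a full-space three-well problem for case (ii). But there are three genuine gaps, and one legitimate alternative, worth flagging.

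\textbf{The reflection formula in case (ii) is wrong.} You set $\tilde u(x) = R\,u(Rx)$ on the lower half-space, with $R$ the reflection across $\Pi_\xi$. That choice neither glues in $H^1$ nor produces the claimed strain reflection in general. For the $H^1$-gluing you would need $u(x) = R\,u(x)$ on $\{x\cdot\xi=0\}$, i.e.\ $u\cdot\xi = 0$ on the trace, which is not enforced. For the strain identity you would need $RFR = -F$, which with $F = b\otimes\xi$ reads $(Rb)\otimes\xi = b\otimes\xi$, forcing $b\perp\xi$. The correct reflection (the one the paper uses) is the plain composition $\tilde u(x) = u(Bx)$, \emph{without} the left-multiplication, and then the identity $FB = -F$ (equivalently $-GB = G$) holds for every $b$ precisely because $F$ is rank one with right vector $\xi$; this is what makes $|\nabla\tilde u - \tilde\chi F| = |\nabla u - \chi F|\circ B$. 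Left-multiplying by $R$ is an instinct from the frame-indifferent (nonlinear) setting and is inappropriate in this gauge-free model.

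\textbf{You omit the perimeter created on the interface.} After reflecting, $\tilde\chi$ jumps across the hyperplane $\{x\cdot\xi=0\}$ wherever $\chi$ touches it, so $|D\tilde\chi|(\R^d) = 2|D\chi|(\Pi_{\xi,+}) + 2\,\H^{d-1}\bigl(\partial^*\{\chi=1\}\cap\Pi_\xi\bigr)$. To conclude $\tilde{\mathcal{E}}(\tilde u,\tilde\chi)\lesssim\mathcal{E}(u,\chi)$ (needed for the lower bound $E(\mu)\gtrsim E^{(3)}(2\mu)$) one must control the interfacial term on $\Pi_\xi$ by the interfacial energy in the open half-space. This is the content of Lemma \ref{comparison_of_energies}, proved via the minimal-surface inequality $\H^{d-1}(\partial^*M\cap\{x_d=0\})\le\H^{d-1}(\partial^*M\cap\{x_d>0\})$, itself established by slicing and induction on the dimension. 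Asserting ``the same (doubled) energy'' glosses over exactly this point.

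\textbf{The covering argument sketch in case (i) is not compatible with Proposition \ref{elastic_lower_bound_on_half_balls}.} You propose a single mesoscopic length scale $\ell$ and to apply the local elastic lower bound on balls where the volume fraction is ``between $1/4$ and $3/4$.'' However Proposition \ref{elastic_lower_bound_on_half_balls} requires the hypotheses in \eqref{small_volume_and_perimeter_of_minority}: \emph{small} volume fraction and small perimeter fraction. A fixed $\ell$ also cannot handle inclusions organized at multiple scales, and balls with volume fraction close to one contribute no perimeter and no elastic energy. The paper follows Knüpfer--Kohn by introducing an $x$-dependent maximal radius $r_x$ defined by a threshold condition \eqref{eq: maximal_radius}, so that on each Vitali ball the fraction is exactly at threshold and either the perimeter bound or the local elastic bound applies; the additivity of volume over the disjoint $1/5$-balls then completes the argument. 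This variable-radius trick is essential rather than an optimization over a single $\ell$.

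\textbf{A legitimate alternative in case (i) upper bound.} Your proposal to place a Knüpfer--Kohn-type lens far from $\Pi_\xi$ is correct and would prove the theorem as stated, since the half-space is unbounded and the constraint becomes vacuous after translation. The paper instead uses a boundary-anchored rhombus \eqref{eq:domain}, \eqref{eq:def_M_higher_dim}; this is more involved, but it yields the quantitative prefactor $\theta(d,\nu)$ in Proposition \ref{prop:ub-genD} and thus exhibits the dichotomy between the two scaling regimes as $\nu$ approaches $\pm e_d$, which the lens-in-the-bulk construction cannot detect. If all you want is the scaling $\sim_{n,\xi}$, your route is simpler; the paper's construction buys the quantitative interpolation discussed after the theorem.
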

	
As we also discussed in the Introduction \ref{introduction}, equations \eqref{eq: generic_energy_scaling} and \eqref{eq: specific_direction_energy_scaling} give the scaling of the minimal energy at fixed volume. Note that for small volumes the interfacial energy dominates, hence the scaling of $E_\eps(V)$ with respect to $V$ corresponds to the one of the relative isoperimetric problem in half-space. By a simple reflection argument the latter coincides with the scaling in $\R^d$, independently of the orientation of the half-space. In particular, the optimal scaling is achieved by an inclusion having the shape of a (half-) ball. 

For large volumes the two terms in \eqref{def: eps_energy_on_pairs} compete with each other, resulting in a change of scaling (with respect to $V$) for the optimal energy. 
The interesting feature of this \textit{constrained in $\Pi_{\xi,+}$} nonlocal, strongly anisotropic isoperimetric problem then, is that, unlike the pure relative isoperimetric problem in half-space, the relationship between the direction of the half-space (indicated by the normal $\xi\in \S^{d-1}$) and the rank-1 direction $n\in \S^{d-1}$ influences the energy scaling. 

Interestingly, the latter agrees with the corresponding scaling for the problem in full space (cf. \cite[Theorem 2.1]{knupfer2011minimal}  and \cite[Theorem 1]{Tribuzio-Rueland_1}) 
for all possible rank-1 directions but the particular ones, namely when $n=\pm\xi$. In this special case, boundary nucleation for the minority phase is energetically favorable than nucleation in the bulk, since by means of a reflection argument, the problem in this case transforms to a problem in $\R^d$, but for a three-well compatible problem without gauge invariance (cf. \cite{Tribuzio-Rueland_1}). 

Indeed, choosing for simplicity $b\in \S^{d-1}$, so that $|F|=1$, we have that 
\[V\geq \eps^d\iff \eps^{\frac{2d}{3d-1}}V^{\frac{3d-3}{3d-1}}\leq  \eps^{\frac{d}{2d-1}}V^{\frac{2d-2}{2d-1}}\,,\]
\textit{i.e.}, \eqref{eq: specific_direction_energy_scaling} for large volumes gives a lower energy that the one predicted for nucleation in the bulk as in \eqref{eq: generic_energy_scaling}.

In what follows in the analysis, we will without restriction assume that 
\begin{equation}\label{eq: reductions_for_plane_and_eps}
\xi=e_d \ \text{ and } \ \eps=1\,.	
\end{equation}
Indeed, if $Q_\xi\in SO(d)$ such that $\xi=Q_\xi e_d$, setting
\begin{align*}
\begin{split}
u_{\xi,\eps}(\tilde x)&:=u(\eps^{-1}Q_\xi\tilde x)\,,\qquad \qquad \qquad  \ \ \ \ \qquad \chi_{\xi,\eps}(\tilde x):=\chi(\eps^{-1}Q_\xi\tilde x)\,, \\
\ \ F_{\xi,\eps}&:=\eps^{-1}FQ_\xi= (\eps^{-1} b)\otimes (Q_\xi^t n)\,, \qquad \ \  V_\eps:=\eps^d V\,,
\end{split}
\end{align*}
we have that $(u_{\xi,\eps}, \chi_{\xi,\eps})\in \mathcal{A}_{e_d}(V_\eps)$. By the change of variables $x:=\eps^{-1}Q_\xi\tilde x$, so that 
\[\tilde x\in\R^d_+\iff x\in \Pi_{\xi,+}\,,\]
we also have 
\begin{equation*}
\mathcal{E}_{\eps}(u,\chi)=\eps^{2-d}\mathcal{E}_1(u_{\xi,\eps},\chi_{\xi,\eps})\implies E_\eps(V)=\eps^{2-d}E_1(V_\eps)\,.
\end{equation*}
In that respect, for notational convenience we will henceforth denote 
\begin{align}\label{relabeled_variables}
\begin{split}	
&\nu:=Q_\xi^t n\,, \ \ G:=a\otimes \nu:= F_{\xi,\eps}\,, \ \ \mu:=V_\eps>0\,, \\[3pt]
&\mathcal{A}(\mu):=\mathcal{A}_{e_d}(\mu)\,, \ E(\mu):=E_1(\mu)\,,
\end{split}
\end{align} 
where we omit the dependence on $\eps$ from the notation of $G,\mu$. Then, Theorem \ref{main_theorem_energy_scaling} becomes equivalent to its following version.
	
\begin{theorem}[\bf Rotated and rescaled version of Theorem \ref{main_theorem_energy_scaling}]\label{main_theorem_rescaled}
In any dimension $d\geq 2$, and with the notation introduced in \eqref{relabeled_variables}, we have the following dichotomy: 

\begin{itemize}
\item[$\rm{(i)}$] For every $\nu\in \S^{d-1}\setminus\{\pm e_d\}$, we have 
\begin{equation}\label{eq: rescaled_generic_energy_scaling}
E(\mu)\sim_{\nu} \begin{cases} \mu^{\frac{d-1}{d}} \ \qquad \qquad \quad \quad   \text{if } \mu\leq |G|^{-2d}\,,\\[4pt]
|G|^{\frac{2d-2}{2d-1}}\mu^{\frac{2d-2}{2d-1}} \qquad \ \ \text{if } \mu\geq|G|^{-2d}\,.
\end{cases}
\end{equation}
\item[$\rm{(ii)}$] For $\nu=\pm e_d$, we have 
\begin{equation}\label{eq: rescaled_specific_direction_energy_scaling}
E(\mu)\sim \begin{cases}
\mu^{\frac{d-1}{d}}\qquad  \qquad \qquad   \text{if } \mu\leq |G|^{-2d}\,,\\[4pt]
|G|^{\frac{2d-2}{3d-1}}\mu^{\frac{3d-3}{3d-1}}\quad \ \ \ \ \text{if } \mu\geq |G|^{-2d}\,.
	\end{cases}
\end{equation}
\end{itemize}
\end{theorem}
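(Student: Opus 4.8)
The plan is to prove Theorem \ref{main_theorem_rescaled}, which by the reduction \eqref{eq: reductions_for_plane_and_eps}--\eqref{relabeled_variables} is equivalent to Theorem \ref{main_theorem_energy_scaling}. In each case the argument splits into matching upper and lower bounds, which are in turn split according to the two volume regimes $\mu \lessgtr |G|^{-2d}$. The \emph{small volume regime} $\mu \le |G|^{-2d}$ is handled first and uniformly in $\nu$: for the lower bound one simply discards the elastic term and invokes the sharp relative isoperimetric inequality in the half-space $\R^d_+$, which up to a dimensional constant gives $\Per(E;\R^d_+) \gtrsim \mu^{(d-1)/d}$; for the upper bound one tests with a half-ball of volume $\mu$ sitting on $\Pi_{e_d}$ together with $u \equiv 0$, whose perimeter is $\sim \mu^{(d-1)/d}$ and whose elastic energy is $\int_{E} |G|^2 \lesssim |G|^2 \mu \le \mu^{(d-1)/d}$ precisely because $\mu \le |G|^{-2d} \le |G|^{-2}$ (for $|G|\ge 1$; the complementary range of $|G|$ is trivially subsumed). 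This pins down the isoperimetric scaling.

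For the \emph{large volume regime} in the non-degenerate case (i), the lower bound is ansatz-free and follows the strategy announced in Section \ref{sec:4}: using the local elastic lower bound of Section \ref{sec: 3} (the analogue of \cite[Proposition 3.1]{knupfer2011minimal}) together with a Vitali-type covering argument, one localizes the configuration on balls where either the volume fraction of $\{\chi=1\}$ is small — so the local elastic energy controls a power of the local volume — or the perimeter is already large; summing the local estimates and optimizing the radius of the covering balls against $|G|$ and $\mu$ produces the exponents $\frac{d}{2d-1}$ and $\frac{2d-2}{2d-1}$. The matching upper bound is the explicit construction of Subsection \ref{sec:4.2}: a thin lens-shaped inclusion of volume $\mu$, flattened in the direction of (one of) the rank-1 vectors associated to $G$ and oriented so as to respect the constraint $E \subset \R^d_+$, with a displacement $u$ that is piecewise affine (essentially a simple laminate smoothed across the lens boundary); balancing the flattening parameter gives the claimed scaling, with the $\nu$-dependent prefactor reflecting how much the optimal lens must be tilted to fit inside the half-space.

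For the \emph{degenerate case} (ii), $\nu = \pm e_d$, the key observation — carried out in Section \ref{sec:reflection} — is that the boundary $\Pi_{e_d}$ is then exactly orthogonal to the rank-1 direction, so one may reflect a configuration $(u,\chi)$ on $\R^d_+$ across $\Pi_{e_d}$ by an odd/even extension chosen so that $\nabla u - \chi G$ extends to a competitor in $\R^d$ for the \emph{three-well} gauge-free problem $E^{(3)}_\eps$ of \cite{Tribuzio-Rueland_1}, with doubled volume $2\mu$ and with $\chi$ now taking values in $\{0,\pm 1\}$ (the reflected copy of the martensite well $G$ being $-G$, since $\nu$ flips sign). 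This reflection is energy-preserving up to a universal factor (no new interface is created on $\Pi_{e_d}$ because the extension is chosen to be continuous across it), so $E(\mu) \sim E^{(3)}_1(2\mu)$, and \eqref{eq: rescaled_specific_direction_energy_scaling} then follows directly from \cite[Corollary 1.1]{Tribuzio-Rueland_1} (equivalently \eqref{eq:intro5-RTlaw}) after tracking the $|G|$-dependence through the rescaling, together with the elementary inequality comparing the two exponents that is already recorded in the text.

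The main obstacle is the ansatz-free lower bound in case (i): the elastic term is nonlocal and strongly anisotropic, so one cannot localize it naively, and the covering argument must be set up so that the local elastic estimate of Section \ref{sec: 3} (which requires the local volume and perimeter to be small relative to the ball) applies on a controlled collection of balls whose sizes are tuned to the a priori unknown shape of the minimizer; making the bookkeeping in this covering robust — and in particular ensuring the $\nu$-dependence only enters the prefactor and not the exponent — is the delicate point, precisely as in \cite{knupfer2011minimal} but now complicated by the presence of the constraint $\Pi_{e_d,+}$ and the need to treat balls near the boundary hyperplane.
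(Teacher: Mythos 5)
Your proposal follows the same overall architecture as the paper: isoperimetric scaling in the small-volume regime, a covering argument fed by the local elastic lower bound of Section~\ref{sec: 3} in case~(i), and a reflection across $\Pi_{e_d}$ reducing case~(ii) to the gauge-free three-well problem of \cite{Tribuzio-Rueland_1}. The small-volume upper bound you suggest with $u\equiv 0$ is a harmless simplification of the paper's cutoff construction, and your description of the large-volume lower bound in case~(i) is accurate.

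There is, however, a genuine gap in your treatment of case~(ii), and it occurs in two places. First, your claim that \emph{``no new interface is created on $\Pi_{e_d}$ because the extension is chosen to be continuous across it''} is incorrect: the extension \eqref{eq: reflected_chi} is by construction \emph{antisymmetric}, $\tilde\chi(x)=-\chi(Bx)$ for $x\in\R^d_-$, so across $\{x_d=0\}$ the value of $\tilde\chi$ jumps from $+1$ to $-1$ whenever the inclusion $\{\chi=1\}$ touches the hyperplane. This produces the additional term $2\H^{d-1}\bigl(\partial^*\{\chi=1\}\cap\{x_d=0\}\bigr)$ in $|D\tilde\chi|(\R^d)$ appearing in \eqref{eq_interfacial_energy_estimates}, and controlling it is precisely the nontrivial content of Lemma~\ref{comparison_of_energies}: the paper proves the minimal-surface comparison \eqref{minimal_surface_inequality} by slicing and induction on dimension. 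Without this, your assertion that the reflection is energy-preserving up to a universal factor is unjustified. Second, the reflection argument alone yields only the \emph{lower} bound, $E(\mu)\geq \frac{1}{4}E^{(3)}_1(2\mu)$. To obtain the matching upper bound one needs a specific competitor in $\mathcal A(\mu)$; the paper obtains it not from Corollary~1.1 of \cite{Tribuzio-Rueland_1} alone but by invoking Remark~4.4 therein, namely the existence of a near-optimal construction for the three-well problem that is odd under $x\mapsto Bx$ and therefore restricts to $\R^d_+$ with the correct volume $\mu$ and comparable energy. (Note the constructions of Subsection~\ref{sec:4.2} cannot be used as a substitute since Proposition~\ref{prop:ub-genD} explicitly requires $\nu\neq\pm e_d$.) Your proposal, as written, says the scaling ``follows directly'' from the reflection and Corollary~1.1, which silently elides the upper-bound construction.
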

Therefore, in the next sections we focus on proving Theorem \ref{main_theorem_rescaled}.
	
\section{Lower bounds for the elastic energy}\label{sec: 3}
As already explained at the end of the previous section, without restriction we now consider the energy
	
\begin{equation}\label{eq: rescaled_energy}
\mathcal{E}(u,\chi):=\int_{\R^d_+}|\nabla u-\chi G|^2\,\mathrm{d}x+|D\chi|(\R^d_+)\,,
\end{equation}
where $G=a\otimes \nu\in \mathcal{R}_1(d)$, among admissible pairs $(u,\chi)\in \mathcal{A}(\mu)$, where
\begin{equation}\label{eq: rescaled_admissible_pairs}
\mathcal{A}(\mu):=\bigg\{(u,\chi)\in H^1(\R^d_+;\R^d)\times BV(\R^d_+;\{0,1\})\colon\ \int_{\R^d_+}\chi\,\mathrm{d}x=\mu\bigg\}\,.
\end{equation}
For every $K\in \mathcal{M}(\R^d_+)$ we use the shorthand notation
\[\E((u,\chi);K):=\int_{K}|\nabla u-\chi G|^2\,\mathrm{d}x+|D\chi |(K)\,,\]
omitting the dependence when $K\equiv\R^d_+$, and we also set
\begin{equation}\label{eq: rescaled_minimal_energy}
E(\mu):=\inf_{\chi}\big[\inf_{u} \mathcal{E}(u,\chi)\big] \ \ \text{among all } (u,\chi)\in \mathcal{A}(\mu)\,.
\end{equation}
	
In this section we prove a lower bound for the bulk elastic energy of a pair $(u,\chi)\in \mathcal{A}(\mu)$ for an inclusion with small volume and interfacial energy on \textit{spherical caps-type} sets based on the boundary hyperplane $\R^d_+$. The statement and main idea of proof are analogous to \cite[Proposition 3.1]{knupfer2011minimal} in the context of linearised elasticity in $\R^d$, but the restriction on $\R^d_+$ here leads to some adjustments in the arguments.

In particular, the rank-$1$ direction $\nu$ will play a crucial role for an appropriate modification of the original covering argument, since the cages used therein will be appropriately tilted.
From a technical point of view, in contrast to the proof of \cite[Proposition 3.1]{knupfer2011minimal}, here we will use the smallness of the elastic energy, $\chi$ and $D\chi$ \textit{only} on the upper horizontal face of the cages, and not on the tilted boundaries and the lower horizontal face, which we are not allowed to \textit{wiggle} anyway because of the constraint.

Again, the proof will be given first in 2 dimensions (see Step 2 below), and then by means of an inductive argument also in higher dimensions $d\geq 3$ (see Step 3). 
	
\begin{proposition}\label{elastic_lower_bound_on_half_balls}
\normalfont Let $d\geq 2$ and $G=a\otimes \nu\in \mathcal{R}_1(d)$ with $\nu\neq \pm e_d$. There exist constants $ c, \gamma\in (0,1)$ only depending on $d$ and $\nu$ with the following property. For every $z\in \R_+^d$, $\rho>0$  
and $\chi\in BV(B^+_\rho(z); \{0,1\})$ with 
\begin{equation}\label{small_volume_and_perimeter_of_minority}
\|\chi\|_{L^1(B^+_\rho(z))}\leq c\rho^d\quad \text{ and }\quad |D\chi|(B^+_\rho(z))\leq c \rho^{d-1}\,, 
\end{equation}
there holds
\begin{equation}\label{1st_lower_bound_for_elastic_energy}
\inf_{u\in H^1(\R^d_+;\R^d)}\int_{B_\rho^+(z)}|\nabla u-\chi G|^2\,\mathrm{d}x\geq  c \rho^{-d}|G|^2\|\chi\|^2_{L^1(B^+_{\gamma \rho}(z))}\,.
\end{equation}
\end{proposition}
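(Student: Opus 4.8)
The plan is to reproduce the scheme of \cite[Proposition 3.1]{knupfer2011minimal}, the new point being that the hypothesis $\nu\ne\pm e_d$ lets one work with ``cages'' adapted to $\nu$ and integrate the displacement only along directions tangential to $\{x_d=0\}$, so that the half-space constraint is never felt. \textbf{Reductions.} By translating in $e_d^{\perp}$ and rescaling I may assume $\rho=1$ and $z=se_d$ with $s:=z_d\ge0$, and by rescaling $u$ also $|G|=1$, i.e. $G=a\otimes\nu$ with $|a|=1$. If $s\ge1$ then $B_1^+(z)$ coincides, up to a null set, with the ball $B_1(z)\subset\overline{\R^d_+}$ and \eqref{1st_lower_bound_for_elastic_energy} is exactly \cite[Proposition 3.1]{knupfer2011minimal}, so I assume $0\le s<1$, so that $B_1^+(z)$ is a genuine spherical cap. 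Put $v:=u\cdot a$, so that $\nabla v-\chi\nu=(\nabla u-\chi G)^t a$ and hence $\int_{B_1^+(z)}|\nabla v-\chi\nu|^2\le\F:=\int_{B_1^+(z)}|\nabla u-\chi G|^2$; it suffices to prove $\F\gtrsim_\nu\|\chi\|_{L^1(B_\gamma^+(z))}^2$. Writing $w:=\nabla v-\chi\nu\in L^2$ we have $\nabla v=\chi\nu+w$. Since \eqref{small_volume_and_perimeter_of_minority} forces the right-hand side of \eqref{1st_lower_bound_for_elastic_energy} to be at most $c^3$, I may also assume $\F\le c^3$. The key elementary fact, used repeatedly, is: any line segment $\ell$ of length bounded below by a universal constant, selected by a mean-value choice over a one-parameter family of parallel segments indexed over an interval of comparable length, carries at most $Cc$ mass of $\chi$ and at most $Cc$ of the one-dimensional variation of $\chi$; as $c$ is small and $\chi$ is $\{0,1\}$-valued this forces $\chi\equiv 0$ on $\ell$, while the same mean-value choice makes the $L^1$-trace of $w$ on $\ell$ at most $\lesssim\F^{1/2}$.

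\textbf{Base case $d=2$.} Here $\nu_1\ne0$: along any horizontal line $\de_1 v=\nu_1\chi+w_1$, and along any line parallel to $\nu^{\perp}$ one has $\de_{\nu^{\perp}}v=w\cdot\nu^{\perp}$ (no $\chi$ term). I would fix a cage: a quadrilateral $Q\subset B_1^+(z)$ with two horizontal edges — an \emph{upper} one whose height $h_1$ is free to vary over an interval of length $\sim1$, and a \emph{lower} one, possibly lying on $\{x_2=0\}$ — and two tilted edges parallel to $\nu^{\perp}$. That such a $Q$ can be chosen, uniformly in $s\in[0,1)$, with $Q\subset B_1^+(z)$, with $B_\gamma^+(z)\subset Q$ for a suitable $\nu$-dependent $\gamma$, and with $h_1$ (and the horizontal placement of $Q$) free as stated, is the elementary geometry collected in Appendix \ref{sec: choice_of_cages}. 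Selecting $h_1$ by the mean-value principle above, I may assume $\chi\equiv0$ on the upper edge and $\int_{\text{upper edge}}|w_1|\dH\lesssim\F^{1/2}$. The divergence theorem applied to $v\,e_1$ on $Q$ gives
\[
\nu_1\int_Q\chi\dx=\int_{\de Q}v\,(e_1\cdot n)\dH-\int_Q w_1\dx,
\]
and, $e_1\cdot n$ vanishing on the two horizontal edges, the boundary term reduces to the contributions of the two tilted edges, along which $n\parallel\nu$. On each tilted edge $v$ differs from its value at the endpoint lying on the upper edge by at most the $L^1$-trace of $w\cdot\nu^{\perp}$ there, and those two endpoint values differ by $\int_{\text{upper edge}}(\nu_1\chi+w_1)\dH=\int_{\text{upper edge}}w_1\dH\lesssim\F^{1/2}$, since $\chi\equiv0$ on the upper edge. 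Collecting the terms, using $|Q|\lesssim1$ and Cauchy--Schwarz for $\int_Q w_1$, one arrives at $|\nu_1|\,\|\chi\|_{L^1(Q)}\lesssim\F^{1/2}+(\text{contribution of the two tilted edges})$; since $B_\gamma^+(z)\subset Q$, modulo the absorption of that last term (discussed below) this is \eqref{1st_lower_bound_for_elastic_energy}.

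\textbf{Induction on $d$.} For $d\ge3$ I would slice $\R^d_+$ by the hyperplanes $\{x_j=t\}$ for a horizontal direction $e_j$, $1\le j\le d-1$, chosen so that the projection of $\nu$ off $e_j$ remains distinct from $\pm e_{d-1}$ (possible since $d-1\ge2$ and $\nu\ne\pm e_d$); each slice is a copy of $\R^{d-1}_+$ meeting a $(d-1)$-dimensional spherical cap. Fubini and the coarea formula turn \eqref{small_volume_and_perimeter_of_minority} into the hypotheses of the $(d-1)$-dimensional statement on a set of slices of measure $\gtrsim_\nu 1$; applying the inductive bound on those slices, integrating in $t$, and using Jensen's inequality to pass from $\int_t\|\chi(t,\cdot)\|_{L^1(\text{central slice})}^2\,\mathrm{d}t$ to a constant multiple of $\|\chi\|_{L^1(B_\gamma^+(z))}^2$ yields \eqref{1st_lower_bound_for_elastic_energy} in dimension $d$.

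\textbf{Expected main difficulty.} The crux is the $d=2$ step, and within it the geometry of the cage: one must fit a $\nu$-adapted quadrilateral inside the spherical cap $B_1^+(z)$ for \emph{every} admissible $z$ — most delicately when $z\in\{x_d=0\}$, so that the central half-ball $B_\gamma^+(z)$ is pinned against the hyperplane — while still trapping a definite fraction of $B_\gamma^+(z)$ along horizontal chords, and while being allowed to move only the upper edge: the lower edge sits on the constraint and the tilted edges cannot, in general, be perturbed, so the contribution of the tilted edges to the identity above cannot be killed by a direct mean-value choice and must instead be absorbed, e.g.\ by averaging the identity over a one-parameter family of cages. Quantifying all this, and thereby also the dependence of $c,\gamma$ on $\nu$ — which necessarily degenerates as $\nu\to\pm e_d$, consistently with the dichotomy in Theorem \ref{main_theorem_rescaled} — is precisely the purpose of the elementary computations in Appendix \ref{sec: choice_of_cages}.
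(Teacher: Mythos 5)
Your high-level plan is correct and matches the paper's: reduce to $\rho=1$, exploit that $\partial_{\nu^\perp}v$ carries no $\chi$-term, build a cage whose tilted sides are parallel to $\nu^\perp$ and whose lower face sits on $\{x_d=0\}$ (so only the upper horizontal face is selected by a mean-value argument), and induct on $d$. But there are three genuine gaps.

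\textbf{(1) The 2D contradiction argument, as written, is incomplete.} You apply the divergence theorem to $v\,e_1$ on a \emph{single} cage $Q$, producing boundary terms $\pm\nu_1\int_{\text{tilted edge}}v\,\mathrm d\H^1$, and you correctly observe that these cannot be handled by a direct mean-value selection (the tilted edges can't be freely moved without losing $B_\gamma^+\subset Q$). You flag this as ``the crux'' and gesture at ``averaging over a family of cages,'' but the paper's fix is structurally different and cleaner: instead of applying the divergence theorem to $\partial_1 v$ on one cage, it applies Stokes to $\partial_{\nu^\perp}u_1$ over the sub-cages $Q^+_{i,y}$. Because $\nu^\perp\cdot\nu=0$, the flux through the tilted sides \emph{vanishes identically}, and only the horizontal faces contribute. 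This transfers the smallness of $u_1$ from the selected top edge (where $\chi\equiv0$) down to every horizontal slice, yielding $|\langle u_1\rangle_{Q_i}|\lesssim \sqrt{c/\gamma}\,|G|\mu_\gamma$ for \emph{three} adjacent cages $Q_1,Q_2,Q_3$ with $B_\gamma^+\subset Q_2$. The $\chi$-mass is then picked up by a separate fundamental-theorem-of-calculus step along $e_1$, connecting a tilted slice of $Q_1$ (on which the average equals $\langle u_1\rangle_{Q_1}$) to the tilted slices of $Q_3$; that transversal crosses $Q_2\supset B_\gamma^+$. In short: the paper avoids tilted-edge traces altogether rather than absorbing them. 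Your version needs to be restructured this way, not merely averaged.

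\textbf{(2) The inductive step is a different, and problematic, route.} You slice $B_1^+(z)$ by the vertical hyperplanes $\{x_j=t\}$, apply the (constrained) $(d-1)$-dimensional statement to the good slices selected by Markov, integrate in $t$, and invoke Jensen. The gap is that Markov only tells you the bad slices $\{t:\|\chi(t,\cdot)\|_{L^1}>c_{d-1}\ \text{or}\ |D\chi(t,\cdot)|>c_{d-1}\}$ have measure $O(c)$; it gives no control on how much of $\|\chi\|_{L^1(B_\gamma^+(z))}$ they carry. All of the $\chi$-mass inside $B_\gamma^+(z)$ could live on the discarded slices, in which case the integrated inductive bound is vacuous. (You also glide over the fact that $u$ is $\R^d$-valued on a $(d-1)$-dimensional slice, so a row of $G$ has to be dropped, as in the paper's Step (3a), before the $(d-1)$-dimensional statement applies.) The paper circumvents this entirely: it applies the $(d-1)$-dimensional result from \cite{Tribuzio-Rueland_1} \emph{once}, on the \emph{unconstrained} $(d-1)$-ball $\Pi_\sigma$ at the top of the cage (selected by Fubini), merely to conclude that $\|\chi\|_{L^1(T_\sigma)}\ll\mu_{\gamma_{d,\nu}}$. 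It then runs the Stokes-plus-FTC argument directly in $d$ dimensions. No slice-by-slice integration, no Jensen.

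\textbf{(3) A minor but real issue in the reduction.} You split at $z_d\ge1$ (full ball) vs.\ $z_d<1$ (genuine cap), and attempt to run the cage construction for all $z_d\in[0,1)$. This forces you to build a $\nu$-adapted quadrilateral with lower edge on $\{x_d=0\}$ inside $B_1^+(z_de_d)$ even for $z_d$ close to $1$, where the cap contains a full ball of radius $\ge\gamma$ around $z$ not touching the hyperplane at all—so the lower face \emph{cannot} sit on the constraint and a different cage is needed. The paper's Step (1) is cleaner: if $z_d>\theta$ for a small $\theta$ depending only on $d,\nu$, restrict to $B_\theta(z_de_d)\subset\R^d_+$ and invoke the unconstrained \cite[Claim 3.1]{Tribuzio-Rueland_1}; the genuine cap geometry is then only handled for $0\le z_d\le\theta$, where the cage construction in Appendix \ref{sec: choice_of_cages} works uniformly.

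In summary: your mechanism for exploiting $\nu\neq\pm e_d$ is the right one, but the contradiction step must use Stokes for $\partial_{\nu^\perp}$ on three adjacent cages (to kill the tilted-side flux) rather than a single divergence-theorem identity, and the higher-dimensional step must use the $(d-1)$-dimensional result only on the top face $\Pi_\sigma$ rather than integrating over vertical slices.
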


\begin{proof}
\begin{step}{\bf (0)} \underline{(Reducing to $z=z_de_d, \rho=1, \nu\in \mathrm{span}\{e_1,e_d\}$)} For the proof of the proposition we can assume without loss of generality that  
\begin{equation}\label{eq: 1st reduction by shift and}
z=z_de_d, \ \ \rho=1 \text{ and that } \ \nu=\nu_1e_1+\nu_de_d \ \text{ with }\nu_1\neq 0\,.
\end{equation} Indeed, for every $(u, \chi)\in H^1(\R^d_+;\R^d)\times BV(B_\rho^+(z);\{0,1\})$ we can consider the pair $(u_{z,\rho,\nu},\chi_{z,\rho,\nu})\in H^1(\R^d_+;\R^d)\times BV(B_1^+\big((\rho^{-1}z_d)e_d);\{0,1\}\big)$, defined via
\[u_{z,\rho,\nu}(y):=u(\hat z+\rho R_\nu y)\,, \ \ \chi_{z,\rho,\nu}(y):=\chi(\hat z+\rho R_\nu y)\,,\]
where $\hat z:=z-z_de_d$,  $R_\nu\in SO(d)$ is such that $R_\nu e_1=\frac{\hat \nu}{|\hat \nu|}$ and $R_\nu e_d=e_d$, where again $\hat\nu:=\nu-\nu_de_d\neq 0$ is the (non-normalised) projection of $\nu$ onto $\R^{d-1}\times \{0\}$. Considering also 
\[G_{\rho,\nu}:=\rho GR_\nu=(\rho a)\otimes (|\hat \nu|e_1+\nu_d e_d)\in \mathcal{R}_1(d)\,,\] 
we see that if \eqref{1st_lower_bound_for_elastic_energy} holds for $(u_{z,\rho,\nu},\chi_{z,\rho,\nu}, G_{\rho,\nu})$ with respect to $B_1^+\big((\rho^{-1}z_d)e_d\big)$ and $B_{\gamma}^+\big((\rho^{-1}z_d)e_d\big)$, then by the change of variables $x:=\hat z+\rho R_\nu y$, it holds true as well for $(u,\chi,G)$ with respect to $B_\rho^+(z)$ and $B_{\gamma\rho}^+(z)$\,. 
\end{step}\\[1pt]

\begin{step}{\bf (1)} \underline{(Reducing further to the case $0\leq z_d\leq \theta$ for some $0<\theta\ll 1$)} We can further assume without restriction that for some $\theta>0$ sufficiently small to be chosen in the next steps, the center of the spherical-type cap satisfies 
\begin{equation}\label{theta_height}
0\leq z_d\leq \theta\,.	
\end{equation}
Indeed, in the case $z_d>\theta$, we have that $B_\theta(z_de_d)\subset \R^d_+$, and the localisation argument of \cite[Claim 3.1]{Tribuzio-Rueland_1} (similar to that of \cite[Proposition 3.1]{knupfer2011minimal}) can be applied in the unconstrained case in the ball $B_{\theta}(z_de_d)\subset B_1^+(z_de_d)$. Indeed, by taking $c>0$ small enough so that $c\leq c_d\theta^d\leq c_d\theta^{d-1}$, we see that \eqref{small_volume_and_perimeter_of_minority} (for $z:=z_de_d$, $\rho:=1$) implies that conditions \cite[(16) of Claim 3.1]{Tribuzio-Rueland_1} are satisfied. 
Therefore, for the dimensional constant $\alpha_d>0$ therein, we obtain
\begin{align}\label{lower_bound_from_knupfer_kohn}
\begin{split}	
\inf_{u\in H^1(\R^d_+;\R^d)}\int_{B_1^+(z_de_d)}|\nabla u-\chi G|^2&
\geq \inf_{u\in H^1(\R^d_+;\R^d)}\int_{B_\theta(z_de_d)}|\nabla u-\chi G|^2\\[3pt]
&\geq  c_d \theta^{-d}|G|^2\|\chi\|^2_{L^1(B_{\alpha_d\theta}(z_de_d))}\,.
\end{split}
\end{align} 
\end{step}

\begin{step}{\bf (2)} \underline{(Proof in the case $d=2$)} Without loss of generality we further assume that $\nu:=(\nu_1,\nu_2)\in \mathbb{S}^1$ is such that $\nu_2\leq 0<\nu_1$, since all other cases are completely analogous. Abbreviating notation, we also set $\vec{z}_2:=z_2e_2$.
Let $\theta,c,\gamma\in (0,1)$ be suitable fixed constants to be specified later, depending only on $\nu$ at this step.
We assume by contradiction that there exists a $\chi$ complying with \eqref{small_volume_and_perimeter_of_minority} for which the inequality in \eqref{1st_lower_bound_for_elastic_energy} does not hold (both for $\rho=1$), \textit{i.e.}, there exists $u\in H^1(\R_+^2;\R^2)$ such that
\begin{equation}\label{eq: basic_contradiction_inequality_1}
\int_{B_1^+(\vec{z}_2)}|\nabla u-\chi G|^2\, \mathrm{d}x<c|G|^2\mu_\gamma^2\,, \ \text{where } \mu_\gamma:=\int_{B_\gamma^+(\vec{z}_2)}\chi\,\mathrm{d}x\,.
\end{equation}
Setting $\nu^\bot:=(-\nu_2,\nu_1)$  and since $G\nu^\bot:=(\nu\cdot\nu^\bot)a=0$, \eqref{eq: basic_contradiction_inequality_1} also implies 
\begin{equation}\label{eq: basic_contradiction_inequality_2}
\int_{B_1^+(\vec{z}_2)}\big|\partial_{\nu^\bot}u\big|^2\, \mathrm{d}x<c|G|^2\mu_\gamma^2\,.
\end{equation}
\end{step}
\begin{figure}[t]
\begin{center}
\includegraphics{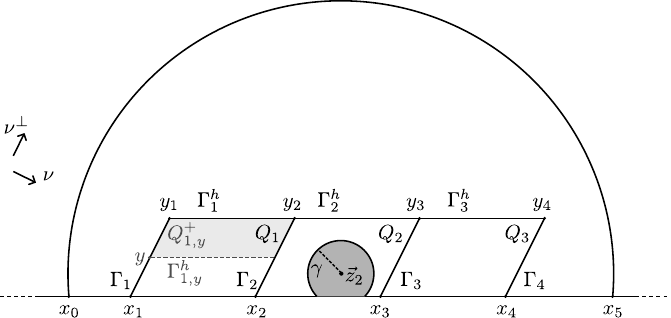}
\caption{An illustration of the \emph{tilted cages} defined in step {\bf(2a)}.}
\label{fig:cage1}
\end{center}
\end{figure}
\begin{step}{\bf (2a)} \underline{(Notation and choice of tilted cages)}
Let us set $x_0:=\big(-\sqrt{1-z_2^2},0\big)$ and $x_5:=\big(\sqrt{1-z_2^2},0\big)$ so that 
\begin{equation}\label{base_cap_endpoints}
|x_5-x_0|=2\sqrt{1-z_2^2}\in [2\sqrt{1-\theta^2},2]\,.
\end{equation}
Analogously to Step 1 in the proof of \cite[Lemma 3.2]{knupfer2011minimal}, one can find $\sigma\in[2\gamma,4\gamma]$ and an ordered quadruple of points $\{x_1,x_2,x_3,x_4\}$ in the $x$-axis, such that 
\begin{align}\label{properties_of_tilting}
\begin{split}	
\mathrm{(i)}&\quad \mathrm{For\ all\ } i=1,2,3\colon \quad |x_{i+1}-x_i|\sim 1 
\,,\\[1.5pt]
\mathrm{(ii)}&\quad B_{\gamma}^+(\vec{z}_2)\subset Q_2\subset Q\subset B_1^+(\vec{z}_2)\,,\\[1.5pt]
\mathrm{(iii)}& \quad \chi=0 \quad  \H^1\text{-a.e.} \text{ on } \Gamma^h
\,,\\[1.5pt]
\mathrm{(iv)}& \quad \int_{\Gamma^h}|\nabla u|^2\,\mathrm{d}\H^1
<\frac{c}{\gamma}|G|^2\mu_\gamma^2\,.
\end{split}
\end{align} 
Here we have used the notation:
\begin{align}\label{eq: stupid notation}
\begin{split}
\forall \sigma>0\colon &\quad \R_{\sigma}^h=\R\times\{\sigma\} \quad \mathrm{and } \quad  S_{\sigma}:=\{x\in \R^2\colon 0\leq x_2\leq \sigma\}\,,\\
\forall i=1,2,3,4\colon &\quad \ \Gamma_i:=(x_i+\R\nu^\bot)\cap S_\sigma \quad  \mathrm{and } \ \ y_i:=(x_i+\R\nu^\bot)\cap \R_{\sigma}^h\,,\\
\forall i=1,2,3\colon &\quad \ \Gamma^h_i:=[y_i,y_{i+1}]\,, \quad Q_i:=\mathrm{conv}\big(\{x_i,x_{i+1},y_{i+1}, y_i\}\big)\,,\\
&\quad \ \Gamma^h:=\bigcup_{i=1}^3 \Gamma^h_i\,, \quad Q:=\bigcup_{i=1}^3 Q_i \,, 
\end{split}
\end{align}
see Figure \ref{fig:cage1}. The proof of \eqref{properties_of_tilting} is elementary and will be given in Appendix \ref{sec: choice_of_cages}.\\[1pt]
In what follows we will make the convention that the constant $c>0$ is allowed to vary from line to line without relabelling, as long as the possibly new constant is the same as the previous one, up to multiplicative prefactors that are independent of $\nu$.
\end{step}	
\\[-3pt]
 
\begin{step}{\bf (2b)} \underline{($u_1$ is small on average in all cages.)} By the invariance of the energy under translations we may also assume without restriction that $\langle u_1\rangle_{\Gamma^h}=0$.
Then, by \eqref{properties_of_tilting}(iii),(iv), we get
\begin{align}\label{eq:Linfty_estimate}
\|u_1\|_{L^\infty(\Gamma^h)}\leq \int_{\Gamma^h}|\partial_1u_1|\ \mathrm{d}\H^1\leq [\L^1(\Gamma^h)]^{\frac{1}{2}}\|\nabla u\|_{L^2(\Gamma^h)} \lesssim \sqrt{\frac{c}{\gamma}}|G|\mu_\gamma\,,
\end{align}
since $\L^1(\Gamma^h)\sim 1$. We now claim that 
\begin{equation}\label{small_averages}
\big|\langle u_1\rangle_{Q_i}\big|\lesssim 
\sqrt{c}\bigg(\frac{1}{\sqrt\gamma}+\frac{\sqrt{\gamma}}{\nu_1}\bigg)|G|\mu_\gamma\,\ \ \forall i=1,2,3\,.
\end{equation}
For this purpose, $\forall y\in \Gamma_i$ let  $\Gamma^h_{i,y}:=(y+\mathbb{R}e_1)\cap Q_i$, and $Q_{i,y}^+:=\mathrm{conv}(\Gamma_{i,y}^h\cup\{y_i,y_{i+1}\})$ (cf. Figure \ref{fig:cage1}) be the corresponding upper sub-cage, with generalised outward pointing unit normal $\nu_{\partial Q_{i,y}^+}$. Note that 
\begin{align*}
\nu_{\partial Q_{i,y}^+}=\begin{cases}
-\nu \ \ \ \ \text{on } \ \Gamma_i\,\\[2pt]
\nu \quad \ \  \ \text{on } \  \Gamma_{i+1}\,\\[2pt]
-e_2 \ \ \ \text{on } \ \Gamma_{i,y}^h\,\\[2pt]
e_2 \quad \ \ \text{on } \ \Gamma_i^h\,,
\end{cases}
\end{align*}
so that by Stokes' theorem
\begin{align*}\label{Stokes}
\int_{Q_{i,y}^+}\big(\partial_{\nu^\bot}u_1\big)\,\mathrm{d}x=\int_{\partial Q_{i,y}^+}u_1\big(\nu^\bot\cdot \nu_{\partial Q_{i,y}^+}\big)\,\mathrm{d}\H^1=\int_{\Gamma_i^h}u_1\nu_1\mathrm{d}\H^1-\int_{\Gamma_{i,y}^h}u_1\nu_1\mathrm{d}\H^1\,.
\end{align*}
In particular, the above identity, \eqref{eq: basic_contradiction_inequality_2}, \eqref{eq:Linfty_estimate}, and the fact that $\L^2(Q_{i,y}^+)\leq \sigma\in [2\gamma,4\gamma]$, yield
\begin{align*}
\bigg|\int_{\Gamma_{i,y}^h} u_1\,\mathrm{d}\H^1\bigg|&\leq \bigg|\int_{\Gamma_i^h} u_1\,\mathrm{d}\H^1\bigg|+\frac{1}{\nu_1}\bigg|\int_{Q_{i,y}^+}\partial_{\nu^\bot}u_1\bigg|\\
&\leq \L^1(\Gamma_i^h)\|u_1\|_{L^{\infty}(\Gamma_i^h)}+\frac{1}{\nu_1}[\L^2(Q_{i,y}^+)]^{\frac{1}{2}}\|\partial_{\nu^\bot}u_1\|_{L^2(B_1^+(\vec{z}_2))}
\\
&\lesssim\sqrt{c}\bigg(\frac{1}{\sqrt\gamma}+\frac{\sqrt{\gamma}}{\nu_1}\bigg)|G|\mu_\gamma
\,. 
\end{align*}
By the coarea formula, and since $\L^2(Q_i)\sim \sigma\sim\gamma$,
\begin{align*}
\big|\langle u_1\rangle_{Q_i}\big|&\sim \frac{1}{\gamma}\bigg|\int_{\Gamma_i} \nu_1\mathrm{d}\H^1(y) \int_{\Gamma_{i,y}^h} u_1\,\mathrm{d}\H^1\bigg|\leq \frac{1}{\gamma} \int_{\Gamma_i} \nu_1\mathrm{d}\H^1(y)\bigg|\int_{\Gamma_{i,y}^h} u_1\,\mathrm{d}\H^1\bigg| \\
&\lesssim \sqrt{c}\bigg(\frac{1}{\sqrt\gamma}+\frac{\sqrt{\gamma}}{\nu_1}\bigg)|G|\mu_\gamma\,,
\end{align*}
which proves  \eqref{small_averages}. 
In particular, by the triange inequality, 
\begin{equation}\label{eq:small_difference_of_averages}
\big|\langle u_1\rangle_{Q_3}-\langle u_1\rangle_{Q_1}\big|\lesssim 
\sqrt{c}\bigg(\frac{1}{\sqrt\gamma}+\frac{\sqrt{\gamma}}{\nu_1}\bigg)|G|\mu_\gamma\,.
\end{equation}
\end{step}

\begin{step}{\bf (2c)} \underline{(Contradiction argument)} In this step we arrive at a contradiction to \eqref{eq:small_difference_of_averages} similarly to Step 5 in the proof of \cite[Lemma 3.2]{knupfer2011minimal}. Writing
\[\langle u_1 \rangle_{Q_1}=\fint_{[x_1,\, x_2]}\mathrm{d}x\bigg( \frac{\nu_1}{\sigma}\int_{\Gamma_x} u_1\,\mathrm{d}\H^1\bigg)\,,\]
where $\Gamma_x:=(x+\R\nu^\bot)\cap S_\sigma$, there exists $x^*\in [x_1,\, x_2]$ such that
\begin{equation}\label{average_slice}
\int_{\Gamma_{x^*}}u_1\,\mathrm{d}\H^1=\frac{\sigma}{\nu_1} \langle u_1 \rangle_{Q_1}\,.
\end{equation}
In the \textit{tilted} $(e_1,\nu^\bot)$-coordinates and for every $x\in [x_3,\,x_4]$, by the fundamental theorem of calculus we have
\[u_1(x,y)=u_1(x^*,y)+\int_{[x^*,\,x]}\partial_1u_1\,\mathrm{d}\H^1\,,\]
and therefore by \eqref{average_slice},
\begin{align}\label{eq: fundamental_theorem_calculus_1}
\langle u_1\rangle_{Q_3}&=\fint_{[x_3,\,x_4]}\,\mathrm{d}x\Big(\frac{\nu_1}{\sigma}\int_{\Gamma_x} u_1(x,y)\,\mathrm{d}y \Big) \nonumber\\
&=\fint_{[x_3,\,x_4]}\,\mathrm{d}x\Big(\frac{\nu_1}{\sigma}\int_{\Gamma_x}\bigg(u_1(x^*,y)+\int_{[x^*,\,x]}\partial_1u_1\,\mathrm{d}\H^1\bigg)\,\mathrm{d}y \Big) \nonumber\\[3pt]
&=
\langle u_1 \rangle_{Q_1} 
+\frac{\nu_1}{\sigma}\fint_{[x_3,\,x_4]}\int_{\Gamma_x}\int_{[x^*,\,x]}(\partial_1u_1-\chi G_{11})\,\mathrm{d}\H^1\,\mathrm{d}y \,\mathrm{d}x \nonumber\\[3pt]
&\qquad \quad \qquad \qquad \qquad \qquad \quad +\frac{\nu_1}{\sigma}G_{11}\fint_{[x_3,\,x_4]}\int_{\Gamma_x}\int_{[x^*,\,x]}\chi\,\mathrm{d}\H^1\,\mathrm{d}y\,\mathrm{d}x\,.
\end{align}
To complete the argument we assume that $|a_1|=\max\{|a_1|,|a_2|\}\geq \frac{|G|}{\sqrt{2}}>0$,
since otherwise 
exactly the same argument can be repeated with $(u_2, G_{21})$ in the place of $(u_1, G_{11})$. 
By \eqref{eq: fundamental_theorem_calculus_1}, the fact that $B_{\gamma}^+(\vec{z}_2)\subset Q_2$, again the coarea formula, 
\eqref{eq: basic_contradiction_inequality_1}, 
and that $\L^2(Q)\lesssim \sigma\in[2\gamma,4\gamma]$, we arrive at
\begin{align*}\label{contradiction_in_2_d}
\bigg(\frac{|G|\nu_1}{\sqrt{2}}\bigg)\mu_\gamma&\leq |G_{11}|\int_{B^+_{\gamma}(\vec{z}_2)}\chi \lesssim 
|G_{11}|\fint_{[x_3,\,x_4]}\nu_1\int_{\Gamma_x}\int_{[x^*,\,x]}\chi\,\mathrm{d}\H^1\,\mathrm{d}y\,\mathrm{d}x\\[3pt]
&\lesssim \sigma 
\big|\langle u_1\rangle_{Q_3}-\langle u_1\rangle_{Q_1}\big|+\int_{Q}|\nabla u-\chi G|
\\
&\lesssim \sigma \big|\langle u_1\rangle_{Q_3}-\langle u_1\rangle_{Q_1}\big|+[\L^2(Q)]^{\frac{1}{2}}\,\|\nabla u-\chi G\|_{L^2(B_1^+(\vec{z}_2))}
\\
&\lesssim \sqrt{c\gamma}|G|\mu_\gamma\,.
\end{align*}
By the choice \eqref{eq: choice_of_theta_gamma} (\textit{i.e.}, that $\gamma\sim \nu_1$), the last inequality leads to a contradiction, as long as the final constant $c>0$ is chosen small enough with respect to $\nu_1$, so that 
\begin{equation}\label{eq:final_choice_of_gamma_2d}
\sqrt{c\gamma}\ll \nu_1\implies c\ll \nu_1\,.
\end{equation}
For instance we can choose $c=c_2\nu_1$, for a sufficiently small absolute constant $c_2\in (0,1)$, which proves the claim for $d=2$, also in accordance with \eqref{lower_bound_from_knupfer_kohn}. \\[-3pt]
\end{step}

\begin{step}{\bf (3)} \underline{(Proof in higher dimensions $d\geq 3$ by induction)}
We follow again the lines of the proof of \cite[Lemma 3.4]{knupfer2011minimal}.
In our setting however, we can avoid the averaging argument therein, making the proof somehow uniform in all dimensions. This is a consequence of the absence of (linear) gauge invariance in this model, together with the use of the appropriately \textit{tilted} cages, that encode the role of the direction of the rank-$1$ connection.

Analogously to Steps 1 and 2 in the proof of \cite[Lemma 3.4]{knupfer2011minimal}, by means of an inductive argument we can ensure that there is essentially only majority phase concentrated on the upper horizontal boundary of the higher dimensional cages. To be more specific, having shown the proposition for $d=2$ in the previous step, for the inductive hypothesis assume that it also holds for all $d'\in \N$ with $2\leq d'\leq d-1$. 

Let again $z:=z_de_d$, $0\leq z_d \leq \theta_{d,\nu}$, and $c_{d,\nu}, \gamma_{d,\nu}>0$ denote the corresponding constants for which the proposition will be valid in dimension $d$, that will be chosen suitably small. By contradiction, we again assume that there exists a $\chi$ such that although
\begin{equation}\label{contradiction_for_chi_higher_d}
\|\chi\|_{L^1(B_1^+(z))}\leq c_{d,\nu}\, \ \ \text{and }\ |D\chi|(B_1^+(z))\leq  c_{d,\nu}\,,
\end{equation}   
there exists $u\in H^1(\R^d_+;\R^d)$ such that 
\begin{equation}\label{contradiction_for_u_higher_d}
\int_{B_1^+(z)}|\nabla u-\chi G|^2\,\mathrm{d}x< c_{d,\nu}|G|\mu^2_{\gamma_{d,\nu}}\,, \text{where } \mu_{\gamma_{d,\nu}}:=\|\chi\|_{L^1(B^+_{\gamma_{d,\nu}}(z))}\,.
\end{equation}   
Denoting $\nu^\bot:=-\nu_de_1+\nu_1e_d$ (assuming again without restriction that $\nu_d\leq 0<\nu_1$), \eqref{contradiction_for_u_higher_d} implies as before that
\begin{equation}\label{contradiction_for_directional_derivative_higher_d}
\int_{B_1^+(z)}|\partial_{\nu^ \bot}u|^2\,\mathrm{d}x< c_{d,\nu}|G|^2\mu^2_{\gamma_{d,\nu}}\,.
\end{equation}   
As in Step (2a) (cf. also Appendix \ref{sec: choice_of_cages}), by means of a simple geometric argument and Fubini's theorem, we can again find $\sigma\in [2\gamma_{d,\nu},4\gamma_{d,\nu}]$ and adjacent \textit{tilted} cages $Q_1,Q_2,Q_3$ (with $Q:=\bigcup_{i=1}^3Q_i$) such that
\begin{align}\label{eq: properties_of_higher_d_cages}
\begin{split}
\rm{(i)}&\quad \nu_{\partial Q_i}\in \{\pm \nu, \pm e_2, \dots, \pm e_{d}\} \ \ \forall i=1,2,3\,,\\[2pt]
\rm{(ii)}& \quad \text {The faces of } Q_i \text{ with outer normal } -e_d \text{ lie on the hyperplane } \{x_d=0\}\,,\\[2pt]
\rm{(iii)}& \quad Q_i \text{ have height } \sigma \text{ in the } e_d\text{-direction, and  all its edges are of order }1 \,,\\[2pt]
\rm{(iv)}& \quad B^+_{\gamma_{d,\nu}}(z)\subset Q_2\subset Q\subset\subset B_1^+(z)\,,\\[2pt]
\rm{(v)}&\quad \|\chi\|_{L^1(\Pi_\sigma)}+|D\chi|(\Pi_\sigma)<\frac{c_{d,\nu}}{\gamma_{d,\nu}}\,,\\[2pt]
\rm{(vi)}&\quad \int_{\Pi_\sigma}|\nabla u-\chi G|^2\,\mathrm{d}\H^{d-1}<\frac{c_{d,\nu}}{\gamma_{d,\nu}}|G|^2\mu^2_{\gamma_{d,\nu}}\,.
\end{split}
\end{align}
Here, we have denoted
\begin{equation}\label{eq: Pi_sigma}
\Pi_\sigma:=B_1^+(z)\cap \{x\in \R^d\colon x_d=\sigma\}=B^{d-1}_{\tilde \rho}\times\{\sigma e_d\}\,,
\end{equation}
for some $\tilde \rho\sim 1$ (depending on $0<\sigma\ll 1$). 
Let us also denote by $T_\sigma$ the face of $Q$ with outer unit normal $e_d$, at vertical height $\sigma$. Note that by choosing $\gamma_{d,\nu}\in (0,1)$ sufficiently small, we can ensure that
\begin{equation}\label{eq: upper_phase_inclusion}
T_\sigma\subset B^{d-1}_{\alpha_{d-1}\tilde \rho}\times\{\sigma e_d\}\subset B^{d-1}_{\tilde \rho}\times\{\sigma e_d\}=\Pi_\sigma\,,
\end{equation}
where $\alpha_{d-1}>0$ is the constant of \cite[Claim 3.1]{Tribuzio-Rueland_1} (in dimension $d-1$).\\[3pt]
\end{step}
\begin{step}{\bf (3a)} \underline{($\chi$ is $\H^{d-1}$-negligible on $\Pi_\sigma$)} Although in this higher dimensional setting we cannot deduce by the previous arguments that $\chi\equiv 0$ $\H^{d-1}$-a.e. on $T_\sigma$, as was the case for $d=2$, we show in this step that we can still have the estimate of the form
\begin{equation*}
\int_{T_\sigma}\chi\,\mathrm{d}\H^{d-1}\ll \mu_{\gamma_{d,\nu}}\,.
\end{equation*}
Recalling that $G=a\otimes \nu=(\nu_1 a)\otimes e_1+(\nu_d a)\otimes e_d$, there exists $i_0\in \{1,\dots,d\}$ for which
\begin{equation}\label{eq: choice_of_max_i}
|a_{i_0}|\geq \frac{|a|}{\sqrt d}=\frac{|G|}{\sqrt d}>0\,.
\end{equation}	
Picking an index $l\in \{1,\dots,d\}$ with $l\neq i_0$, and recalling the notation in \eqref{eq:_compatible_strains}, we consider the reduced matrix $\tilde G\in \mathcal{R}_1(d-1)$, which is obtained from $G$ by deleting the $l$-th row and the $d$-th column. Obsiously $\tilde G$ is still a rank-1 matrix, since $|a_{i_0}|,\nu_1>0$. 

We further consider the map $\tilde u^\sigma\in H^1(\R^{d-1}; \R^{d-1})$, defined via
\[\tilde u^{\sigma}(x_1,\dots,x_{d-1}):=(u_j(x_1,\dots,x_{d-1},\sigma))_{j\neq l}\,,\] 
omitting from $u$ its $l$-th component. If $\nabla'$ denotes the restriction of the gradient operator in the $\{e_1,\dots,e_{d-1}\}$-directions, we can easily deduce the following. By choosing the constant $c_{d,\nu}, \gamma_{d,\nu} >0$ suitable with respect to the constant $c:=c(d-1)>0$ of \cite[Claim 3.1]{Tribuzio-Rueland_1}, and since $\tilde \rho\sim 1$, by \eqref{eq: properties_of_higher_d_cages} and \eqref{eq: Pi_sigma} we have
\begin{equation*}
\int_{B^{d-1}_{\tilde \rho}\times\{\sigma\}}\chi \,\mathrm{d}\H^{d-1}<\frac{c_{d,\nu}}{\gamma_{d,\nu}}\leq c\tilde \rho^d \ \text{and } \ |D\chi|(B^{d-1}_{\tilde \rho}\times\{\sigma\})<\frac{c_{d,\nu}}{\gamma_{d,\nu}} \leq c\tilde\rho^{d-1}\,,
\end{equation*}
i.e., \cite[Claim 3.1]{Tribuzio-Rueland_1} is applicable in dimension $(d-1)$ in the ball $B^{d-1}_{\tilde \rho}$. In particular,
\begin{equation}\label{eq: lower_bound_in_d-1}
\int_{B^{d-1}_{\tilde \rho }}|\nabla'\tilde u^{\sigma}-\chi(\cdot,\sigma) \tilde G|^2\,\mathrm{d}\H^{d-1}\geq c\tilde \rho^{1-d}|\tilde G|^2\|\chi \|^2_{L^1\big(B^{d-1}_{\alpha_{d-1}\tilde \rho}\times\{\sigma\}\big)}\,.
\end{equation}
Combining \eqref{eq: properties_of_higher_d_cages}(vi), \eqref{eq: upper_phase_inclusion}, \eqref{eq: choice_of_max_i}, \eqref{eq: lower_bound_in_d-1}, and the fact that $|\tilde G|\geq |a_{i_0}|\nu_1$, we infer
\begin{align*}
c\tilde \rho^{1-d}\frac{|G|^2\nu^2_1}{d}\|\chi\|_{L^1(T_\sigma)}^2&\leq c\tilde \rho^{1-d}|\tilde G|^2\|\chi\|_{L^1(B^{d-1}_{\alpha_{d-1}\tilde \rho}\times\{\sigma\})}^2\leq \int_{B^{d-1}_{\tilde \rho }}|\nabla'\tilde u^{\sigma}-\chi(\cdot,\sigma) \tilde G|^2\,\mathrm{d}\H^{d-1}\\[2pt]
&\leq  \int_{\Pi_\sigma}|\nabla u-\chi G|^2\,\mathrm{d}\H^{d-1}<\frac{c_{d,\nu}}{\gamma_{d,\nu}}|G|^2\mu_{\gamma_{d,\nu}}^2\,,
\end{align*} 
\textit{i.e.},
\begin{equation}\label{eq: smallness_of_chi_on_upper_boundary}
\int_{T_\sigma}\chi\,\mathrm{d}\H^{d-1}\lesssim \frac{1}{\nu_1}\sqrt{\frac{c_{d,\nu}}{\gamma_{d,\nu}}}\mu_{\gamma_{d,\nu}}\,,
\end{equation}
which proves the desired assertion.
\\[4pt]
\end{step}	
\begin{step}{\bf (3b)} \underline{(Contradiction as in the $2$-dimensional case)} Recalling the choice of the index $i_0$ in \eqref{eq: choice_of_max_i}, and by the translation invariance of the problem, we can assume without restriction that 
\[\langle u_{i_0} \rangle_{T_\sigma}=0\,.\] 
	
Since by construction $T_\sigma$ is a $(d-1)$-dimensional normal cuboid with side lengths all of order 1, by the $L^1$-Poincar{\' e} inequality on $T_\sigma$, \eqref{eq: properties_of_higher_d_cages}(vi) and \eqref{eq: smallness_of_chi_on_upper_boundary} we deduce that
\begin{align}\label{u_i_o_small_in_L_1}
\begin{split}	
\|u_{i_0}\|_{L^1(T_\sigma)}&\lesssim \|\nabla'u_{i_0}\|_{L^1(T_\sigma)}\leq \|\nabla u\|_{L^1(T_\sigma)}\leq \|\nabla u-\chi G\|_{L^1(T_\sigma)}+|G|\|\chi\|_{L^1(T_\sigma)}\\[3pt]
&\lesssim \|\nabla u-\chi G\|_{L^2(\Pi_\sigma)}+|G|\|\chi\|_{L^1(T_\sigma)}\\
& \lesssim\sqrt{\frac{c_{d,\nu}}{\gamma_{d,\nu}}}\big(1+\frac{1}{\nu_1}\big)|G|\mu_{\gamma_{d,\nu}}
\,,
\end{split}
\end{align}  
With \eqref{u_i_o_small_in_L_1} at hand, the rest of the contradiction argument can be performed as in the 2-dimensional case (see Step 2). Indeed, for every $i=1,2,3$ and $s\in (0,\sigma]$, let 
\[T^h_{i,s}:=Q_i\cap \{x_d=s\}\, \ \text{and } Q_{i,s}^+:=\mathrm{conv}(T_{i,s}^h\cup T^h_{i,\sigma})\,,\]
so that $Q_i=\bigcup_{s\in (0,\sigma)}T^h_{i,s}$. Note that $\L^d(Q_{i,s}^+)\lesssim \sigma\in[2\gamma_{d,\nu},4\gamma_{d,\nu}]$. Since
\[\nu_{\partial Q_{i,s}^+}\in\{\pm \nu, \pm e_2,\dots,\pm e_d\}\, \ \text {and } \ \nu^{\bot}\cdot \nu=\nu^{\bot}\cdot e_2=\dots \nu^{\bot}\cdot e_{d-1}=0\,, \]
by using Stokes' theorem in $Q_{i,s}^+$, \eqref{contradiction_for_directional_derivative_higher_d} and \eqref{u_i_o_small_in_L_1}, (as before \eqref{eq:small_difference_of_averages}) we infer again that 
\begin{align*}
\bigg|\int_{T_{i,s}^h} u_{i_0}\,\mathrm{d}\H^{d-1}\bigg|&\leq \bigg|\int_{T^h_{i,\sigma}} u_{i_0}\,\mathrm{d}\H^{d-1}\bigg|+\frac{1}{\nu_1}\bigg|\int_{Q_{i,s}^+}\partial_{\nu^\bot}u_{i_0}\,\mathrm{d}x\bigg|\\
&\leq \|u_{i_0}\|_{L^1(T^h_{i,\sigma})}+\frac{1}{\nu_1}[\L^d(Q_{i,s}^+)]^{\frac{1}{2}}\|\partial_{\nu^\bot}u_{i_0}\|_{L^2(B_1^+(z))}
\\
&\lesssim \frac{\sqrt{c_{d,\nu}}}{\nu_1}\Big(\frac{1}{\sqrt{\gamma_{d,\nu}}}+\sqrt{\gamma_{d,\nu}}\Big) |G|\mu_{\gamma_{d,\nu}}\,.
\end{align*}
By the coarea formula and the fact that $\L^d(Q_{i})\sim \sigma\sim\gamma_{d,\nu}$, we get similarly to the 2-dimensional case,
\begin{align*}
\big|\langle u_{i_0}\rangle_{Q_i}\big|&\lesssim \frac{\sqrt{c_{d,\nu}}}{\gamma_{d,\nu}}\Big(\frac{1}{\sqrt{\gamma_{d,\nu}}}+\sqrt{\gamma_{d,\nu}}\Big) |G|\mu_{\gamma_{d,\nu}}
\end{align*}
in particular,
\begin{equation}\label{eq:higher_d_small_difference_of_averages}
\big|\langle u_{i_0}\rangle_{Q_3}-\langle u_{i_0}\rangle_{Q_1}\big|\lesssim  \frac{\sqrt{c_{d,\nu}}}{\gamma_{d,\nu}}\Big(\frac{1}{\sqrt{\gamma_{d,\nu}}}+\sqrt{\gamma_{d,\nu}}\Big) |G|\mu_{\gamma_{d,\nu}}\,.
\end{equation}
\end{step}
The estimate \eqref{eq:higher_d_small_difference_of_averages} and \eqref{eq: properties_of_higher_d_cages}(iv) give the final desired contradiction, by slicing in the $e_1$-direction, exactly as in Step (2c), by first picking a \textit{tilted} slice (with normal $\pm \nu$) in $Q_1$, on which the average of $u_{i_0}$ equals $\langle u_{i_0}\rangle_{Q_1}$, using the fundamental theorem of calculus on all horizontal lines passing through this and all the corresponding tilted slices of $Q_3$, and using the fact that $|G_{1i_0}|=\nu_1|a_{i_0}|\geq \frac{\nu_1|G|}{\sqrt{d}}>0$. Then by the very same argument as above \eqref{eq:final_choice_of_gamma_2d}, using \eqref{contradiction_for_u_higher_d}, \eqref{eq:higher_d_small_difference_of_averages}, and that $\L^d(Q)\sim \sigma\sim\gamma_{d,\nu}$, we would get 
\begin{align*}\label{contradiction_in_any_d}
\bigg(\frac{|G|\nu_1}{\sqrt{d}}\bigg)\mu_\gamma&\lesssim \sigma \big|\langle u_1\rangle_{Q_3}-\langle u_1\rangle_{Q_1}\big|+[\L^2(Q)]^{\frac{1}{2}}\,\|\nabla u-\chi G\|_{L^2(B_1^+(\vec{z}_2))}
\bigg)\\
& \lesssim \sqrt{c_{d,\nu}}\bigg(\frac{1}{\sqrt{\gamma_{d,\nu}}}+\sqrt{\gamma_{d,\nu}}\bigg)|G|\mu_{\gamma_{d,\nu}}\\
&\lesssim \sqrt{\frac{c_{d,\nu}}{\gamma_{d,\nu}}}|G|\mu_{\gamma_{d,\nu}}\,.
\end{align*}
Choosing again $\gamma_{d,\nu}\sim \nu_1$, the last inequality leads to a contradiction, as long as the final constant $c_{d,\nu}>0$ is chosen small enough with respect to $\nu_1$, so that 
\begin{equation}\label{eq:final_choice_of_gamma_any_d}
\sqrt{\frac{c_{d,\nu}}{\gamma_{d,\nu}}}\ll \nu_1\implies c_{d,\nu}\ll \nu_1^3\,,
\end{equation}
\textit{e.g.}, we can choose $c_{d,\nu}=c_d\nu_1^3$, for a sufficiently small dimensional constant $c_d\in (0,1)$. \\
This completes the proof in all dimensions.
\end{proof}

\begin{remark}\label{rem:power_law_lower}
\normalfont Our method of proof provides also an explicit dependence of the constant $c_{d,\nu}$ with respect to $\dist(\nu,\{\pm e_d\})$. In particular, for $\nu=\nu_1e_1+\nu_d e_d$ with $\nu_1>0$ (something that as we saw can be assumed without restriction), and since $\nu_1\sim\dist(\nu,\{\pm e_d\})$, the choices below \eqref{eq:final_choice_of_gamma_2d} and \eqref{eq:final_choice_of_gamma_any_d}, yield
\begin{equation}\label{eq:explicit_c_d_nu}
c=c_{d,\nu}
\sim
\begin{cases}
\dist(\nu,\{\pm e_d\}) &\text{if } d=2\,,\\[2pt]
\dist^3(\nu,\{\pm e_d\}) &\text{if } d\geq 3\,,
\end{cases}
\end{equation}
up to a sufficiently small dimensional constant. The difference in scaling comes essentially from the fact that for $d\geq 3$ we can only ensure \eqref{eq: smallness_of_chi_on_upper_boundary}, in contrast to the case $d=2$, where the discreteness of $D\chi$ on lines guarantess the even stronger property \eqref{properties_of_tilting}(iii). 
\end{remark}

\section{Proof of Theorem \ref{main_theorem_rescaled}$(i)$} \label{sec:4}
\subsection{The lower bound}
The proof of the lower bound follows the lines of proof for the lower bound in \cite[Theorem 3.6]{knupfer2011minimal}, by slightly adapting the covering argument therein to the case of the upper half-space. For the sake of making the paper self-contained, but also in order to keep track of the specific dependence of certain constants with respect to the rank-$1$ direction $\nu$, we include the proof by addressing the necessary adaptations.
For this purpose, for $\mu\leq |G|^{-2d}$, by the (relative) isoperimetric inequality in half-space, we have that 
\[\mathcal{E}(u,\chi)\geq |D\chi|(\R^d_+)\gtrsim \|\chi\|_{L^1(\R^d_+)}^{\frac{d-1}{d}}=\mu^{\frac{d-1}{d}}\ \ \forall (u,\chi)\in \mathcal{A}(\mu)\ \ \implies E(\mu)\gtrsim  \mu^{\frac{d-1}{d}}\,,\]
hence it suffices to consider the case $\mu\geq |G|^{-2d}$. In that respect, let 
\begin{equation}\label{eq: M_chi_def}
M_\chi := \mathrm{spt} (\chi)\subset \{x\in \R^d\colon x_d\geq 0\}\,,  \ \text{so that } \L^d(M_\chi)=\mu\,.
\end{equation} 
For $\L^d$-a.e. $x\in M_\chi$ we have 
\[\Theta^d(M_\chi,x):=\lim_{\rho\searrow 0}\frac{\L^d(M_\chi\cap B_\rho(x))}{\omega_d\rho^d}=1\,,\]
a property which without restriction we assume that holds true for every  $x\in M_\chi\cap \R^d_+$, since this will be enough for the covering argument. For every such $x\in M_\chi\cap \R^d_+$, we define
\begin{equation}\label{eq: maximal_radius}
r_x := \inf \Big\{r>0\colon 
r^{-d}\L^d(M_\chi\cap B_r(x)) \leq \tau_{\nu} \min\big\{1, |G|^{-\frac{2d}{2d-1}}\L^d(M_\chi\cap B_r(x))^{\frac{-1}{2d-1}}\big\}\Big\}\,,
\end{equation}
where $\tau_{\nu}\in (0,1)$ is a sufficiently small 
constant to be suitably chosen later. (For the justification of the existence of such $r_x>0$ and a motivation for its introduction, cf. the paragraph below \cite[Equation (3.42)]{knupfer2011minimal}). By the minimality condition that $r_x$ satisfies, it is immediate that 

\begin{align}\label{eq: cases_for_R_x}
\begin{split}
\text{either } \quad  \L^d\big(M_\chi\cap B_{r_x}(x)\big)\leq |G|^{-2d} & \implies \L^d\big(M_\chi\cap B_{r_x}(x)\big)= \tau_{\nu} r_x^d\,,\\[2pt]
\text{or } \quad \L^d\big(M_\chi\cap B_{r_x}(x)\big)\geq |G|^{-2d} & \implies \L^d\big(M_\chi\cap B_{r_x}(x)\big)^{\frac{2d}{2d-1}}=\tau_{\nu} |G|^{-\frac{2d}{2d-1}}r_x^d\,.
\end{split}
\end{align}
\begin{step}{1} \underline{(The covering argument)}
Starting from the trivial covering 
\[M_\chi\subset \bigcup_{x\in M_\chi} B_{r_x/5}(x)\,, \ \text{with} \ \sup_{x\in M_\chi} r_x\le \tau_{\nu}^{-\frac{1}{d}}(|G|\mu)^{\frac{2}{2d-1}}<+\infty\,, 
\] 
(the last bounds following from \eqref{eq: cases_for_R_x} and $\mu\geq |G|^{-2d}$, independently of $x\in M_\chi$), by Vitali’s covering lemma there exists an at most countable subset of points $(z_i)_{i\in \N} \subset M_\chi$ such that
\begin{equation}\label{Vitali's_covering}
M_\chi\subset \bigcup_{i=1}^\infty B_{r_i}(z_i)\,, \  (z_i,r_i):=(z_{x_i}, r_{x_i}) \ \ \text{and } \{B_{r_i/5}(z_i)\}_{i\in \N} \ \text {are pairwise disjoint}\,.
\end{equation}
By \eqref{Vitali's_covering} we have
\begin{equation}\label{eq: additivity_of_volume}
\sum_{i=1}^\infty \L^d(M_\chi\cap B_{r_i}(z_i))\geq \L^d(M_\chi)=\mu\,, 
\end{equation}
and for every pair $(u,\chi)\in \mathcal{A}(\mu)$ that we fix next:
\begin{equation}\label{additivity_of_energy}
\sum_{i=1}^\infty \E\big((u,\chi); B^+_{r_i/5}(z_i)\big)\leq \E(u,\chi)\,.
\end{equation}
Exactly as in \cite{knupfer2011minimal} (cf. the proof of (3.44) and (3.45) therein) it can be checked that
\begin{equation}\label{inverse_volume_bounds}
\L^d(M_\chi\cap B_{\gamma r_i/5}(z_i))\geq c_{1,d,\nu}\ \L^d(M_\chi \cap B_{r_i}(z_i))\,,
\end{equation}
where 
\begin{equation}\label{eq: c_1_2_d_nu_explicit}
c_{1,d,\nu}\sim \gamma^d\,,
\end{equation}
and $\gamma=\gamma(d,\nu)\in (0,1)$ is the constant of Proposition \ref{elastic_lower_bound_on_half_balls}. Moreover, 
\begin{equation}\label{inverse_energy_bounds}
\E\big((u,\chi); B^+_{r_i/5}(z_i)\big)\geq c_{2,d,\nu}\ 
|G|^{\frac{2d-2}{2d-1}}\big[\L^d(M_\chi\cap B^+_{\gamma r_i/5}(z_i))\big]^{\frac{2d-2}{2d-1}}\, \quad \forall i\in \N\,,
\end{equation}
where 
\begin{equation}\label{eq: c_2_d_nu_explicit}
c_{2,d,\nu} \sim c^2\gamma^\frac{2d^2}{2d-1}
\,.
\end{equation}
Once \eqref{inverse_volume_bounds} and \eqref{inverse_energy_bounds} are established (which we do at the end of the proof),
for the lower bound we easily estimate using \eqref{Vitali's_covering}, \eqref{inverse_energy_bounds}, \eqref{inverse_volume_bounds} and \eqref{eq: additivity_of_volume},
\begin{align}\label{eq_lower_bound_pf}
\E(u,\chi)&\geq \sum_{i=1}^\infty \E((u,\chi); B^+_{r_i/5}(z_i)) \geq c_{2,d,\nu} \ 
|G|^{\frac{2d-2}{2d-1}}\sum_{i=1}^\infty\big[\L^d(M_\chi\cap B^+_{\gamma r_i/5}(z_i))\big]^{\frac{2d-2}{2d-1}} \nonumber \\[3pt]
&\geq c_{2,d,\nu} \ 
|G|^{\frac{2d-2}{2d-1}}\bigg[\sum_{i=1}^\infty\L^d\big(M_\chi\cap B_{\gamma r_i/5}(z_i)\big)\bigg]^{\frac{2d-2}{2d-1}}\nonumber\\
&\geq c_{2,d,\nu}(c_{1,d,\nu})^{\frac{2d-2}{2d-1}} \ 
|G|^{\frac{2d-2}{2d-1}}\bigg[\sum_{i=1}^\infty\L^d\big(M_\chi\cap B_{r_i}(z_i)\big)\bigg]^{\frac{2d-2}{2d-1}} \nonumber\\[3pt]
& \geq c_{2,d,\nu}\big(c_{1,d,\nu}\big)^{\frac{2d-2}{2d-1}} \ 
|G|^{\frac{2d-2}{2d-1}}\mu^{\frac{2d-2}{2d-1}} 
\,,
\end{align} 
where we additionally used \eqref{eq: M_chi_def} and the algebraic inequality
\[\sum_{i=1}^\infty t_i^\alpha\geq \bigg(\sum_{i=1}^\infty t_i\bigg)^\alpha \ \ \forall t_i\geq 0\, \ \text{and } 0<\alpha<1 \ \ \ \text{(which can be proved by induction)}\,.\] By \eqref{eq: rescaled_energy} and \eqref{eq: rescaled_minimal_energy}, \eqref{eq_lower_bound_pf} yields the lower bound in the second case of \eqref{eq: rescaled_generic_energy_scaling}. 

The verification of \eqref{inverse_volume_bounds} and \eqref{inverse_energy_bounds} is done by following verbatim the corresponding arguments in \cite{knupfer2011minimal}. For completeness, and in order to keep track of the dependence of the relevant constants on $\nu$, we give them in detail also here.
\end{step}

\begin{step}{2} \underline{(Proof of \eqref{inverse_volume_bounds})}
In order to prove \eqref{inverse_volume_bounds} 
we use the minimality condition for $r_i$ with respect to \eqref{eq: maximal_radius}. In particular, if $i\in \N$ is such that
\[\L^d(M_\chi\cap B_{r_i}(z_i))\leq |G|^{-2d}\,, \]
then, since $\gamma r_i/5<r_i$, by \eqref{eq: maximal_radius} and \eqref{eq: cases_for_R_x} we have
\begin{align}\label{case_1_inv_vol}
&\frac{\L^d(M_\chi\cap B_{\gamma r_i/5}(z_i))}{(\gamma r_i/5)^d}\geq \tau_{\nu}= \frac{\L^d(M_\chi\cap B_{r_i}(z_i))}{r_i^d} \nonumber \\[2pt]
\implies& \L^d(M_\chi\cap B_{\gamma r_i/5}(z_i))\geq (\gamma/5)^d\L^d(M_\chi\cap B_{r_i}(z_i))\,.
\end{align}
If $i\in \N$ is such that
\[\L^d(M_\chi\cap B_{r_i}(z_i))> |G|^{-2d}\ \ \text{ but } \ \L^d(M_\chi\cap B_{\gamma r_i/5}(z_i))\leq |G|^{-2d}\,,\]
then, again since $\gamma r_i/5<r_i$, by \eqref{eq: maximal_radius} and \eqref{eq: cases_for_R_x}, we have
\begin{align}\label{case_2_inv_vol}
&\frac{\L^d(M_\chi\cap B_{\gamma r_i/5}(z_i))}{(\gamma r_i/5)^d}\geq \tau_{ \nu}=|G|^\frac{2d}{2d-1}\frac{\big(\L^d(M_\chi\cap B_{r_i}(z_i))\big)^{\frac{2d}{2d-1}}}{r_i^d}
 \nonumber \\[3pt]
\implies& \L^d(M_\chi\cap B_{\gamma r_i/5}(z_i))\geq (\gamma/5)^d \big(|G|^{2d}\L^d(M_\chi\cap B_{r_i}(z_i))\big)^{\frac{1}{2d-1}}\L^d(M_\chi\cap B_{r_i}(z_i))\nonumber \\[3pt]
\implies& \L^d(M_\chi\cap B_{\gamma r_i/5}(z_i))> (\gamma/5)^d\L^d(M_\chi\cap B_{r_i}(z_i))\,.
\end{align} 
Finally, if $i\in \N$ is such that
\[\L^d(M_\chi\cap B_{r_i}(z_i))> |G|^{-2d}\ \text{ and also } \L^d(M_\chi\cap B_{\gamma r_i/5}(z_i))> |G|^{-2d}\,, \]
then, as before
\begin{align}\label{case_3_inv_vol}
&\frac{\L^d(M_\chi\cap B_{\gamma r_i/5}(z_i))}{(\gamma r_i/5)^d}> (\tau_{\nu}|G|^{-\frac{2d}{2d-1}})\L^d(M_\chi\cap B_{\gamma r_i/5}(z_i))^{\frac{-1}{2d-1}}
 \nonumber \\[2pt]
& \qquad \qquad \qquad \qquad \quad=\bigg(\frac{[\L^d(M_\chi\cap B_{r_i}(z_i))]^{\frac{2d}{2d-1}}}{r_i^d}\bigg)\L^d(M_\chi\cap B_{\gamma r_i/5}(z_i))^{\frac{-1}{2d-1}} \nonumber\\[2pt]
\implies& \L^d(M_\chi\cap B_{\gamma r_i/5}(z_i))> (\gamma/5)^{\frac{2d-1}{2}}\L^d(M_\chi\cap B_{r_i}(z_i))\,.
\end{align} 
Hence, since $\gamma:=\gamma(d,\nu)\in (0,1)$, by \eqref{case_1_inv_vol}-\eqref{case_3_inv_vol} we infer \eqref{inverse_volume_bounds} with the constant as in \eqref{eq: c_1_2_d_nu_explicit}.
\end{step}\\[2pt]

\begin{step}{3} \underline{(Proof of \eqref{inverse_energy_bounds})}
 If $i\in \N$ and the first case in \eqref{eq: cases_for_R_x} holds, then 
\begin{align*}
\begin{split}
\L^d(M_\chi\cap B^+_{r_i/5}(z_i))&\leq \L^d(M_\chi\cap B_{r_i}(z_i))=\tau_{\nu} r_i^d\\&<\frac{1}{4}\omega_d(r_i/5)^d\leq \frac{1}{2}\L^d(B^+_{r_i/5}(z_i))\,,
\end{split}
\end{align*}
\textit{i.e.}, 
\[\min\big\{\L^d(M_\chi\cap B^+_{r_i/5}(z_i))\,,\ \L^d(B^+_{r_i/5}(z_i)\setminus M_\chi)\big\}=\L^d(M_\chi\cap B^+_{r_i/5}(z_i))\,,\]
by choosing $\tau_{\nu}\in (0,1)$ small enough so that $\tau_{ \nu}<\frac{\omega_d}{4\cdot5^d}$. Therefore, by the relative isoperimetric inequality in the spherical cap $B^+_{r_i/5}(z_i)$ (cf. \cite[inequality (3.43)]{Ambrosio-Fusco-Pallara:2000}) we obtain
\[\H^{d-1}(\partial^*M_\chi\cap B^+_{r_i/5}(z_i))\geq C_d \big[\L^d(M_\chi\cap B^+_{r_i/5}(z_i))\big]^{d-1/d}\,.\]
The fact that the constant $C_d>0$ can be chosen purely dimensional can easily be  seen by the scaling invariance of the relative isoperimetric inequality, and the fact that $z_i\in \R^d_+$, which in particular implies that 
\[\{z\in \R^d\colon |z-z_i|\leq r_i/5\,,\ z_d\geq (z_i)_d\}\subset B^+_{r_i/5}(z_i)\subset B_{r_i}(z_i)\,.\]
(One can then take $C_d:=\min_{t\in [0,1/5]}C_{d,t}$, where $C_{d,t}>0$ is the relative isoperimetric constant with respect to the spherical cap $\{x\in \R^d\colon |x-te_d|\leq 1/5\,,x_d\geq 0\}$.) Therefore, using again \eqref{inverse_volume_bounds} and the first case in \eqref{eq: cases_for_R_x}, 

\begin{align}\label{eq: 1_pf_of_energy_lower_bound}
\E\big((u,\chi);B^+_{r_i/5}(z_i)\big)&\geq |D\chi|(B^+_{r_i/5}(z_i))=\H^{d-1}\big(\partial^*M_\chi\cap B^+_{r_i/5}(z_i)\big) \nonumber\\[3pt]
&\geq C_d \big[\L^d(M_\chi\cap B_{r_i/5}(z_i))\big]^{d-1/d}\gtrsim  (c_{1,d,\nu})^{\frac{d-1}{d}} \big[\L^d(M_\chi\cap B_{r_i}(z_i))\big]^{d-1/d}\nonumber\\[3pt]
&\gtrsim (c_{1,d,\nu})^{\frac{d-1}{d}} \big[\L^d(M_\chi\cap B_{r_i}(z_i))\big]^{\frac{-(d-1)}{d(2d-1)}}\big[\L^d(M_\chi\cap B_{r_i/5}(z_i))\big]^{\frac{2d-2}{2d-1}} \nonumber\\[3pt]
&\gtrsim (c_{1,d,\nu})^{\frac{d-1}{d}} |G|^{\frac{2d-2}{2d-1}}\big[\L^d(M_\chi\cap B^+_{\gamma r_i/5}(z_i))\big]^{\frac{2d-2}{2d-1}}\,.
\end{align}

Suppose now that $i\in \N$ is such that the second case in \eqref{eq: cases_for_R_x} holds, which in particular implies that
\[\tau_{\nu}^{1/2d}r_i^{1/2}\geq |G|^{-1}\,.\]
If \[|D\chi|\big(B_{r_i/5}^+(z_i)\big)\geq c (r_i/5)^{d-1}\,,\]
where $c>0$ is the constant from Proposition \ref{elastic_lower_bound_on_half_balls} (cf. also \eqref{eq:explicit_c_d_nu}). Then by \eqref{eq: cases_for_R_x}, we have
\begin{align}\label{eq: 2_pf_of_energy_lower_bound}
\begin{split}
\E\big((u,\chi);B^+_{r_i/5}(z_i)\big)&\geq |D\chi|(B^+_{r_i/5}(z_i))\geq c (r_i/5)^{d-1} 
\\[2pt]
&\sim c\tau_{\nu}^{-\frac{d-1}{d}}|G|^{\frac{2d-2}{2d-1}}\big[\L^d(M_\chi\cap B_{r_i}(z_i))\big]^{\frac{2d-2}{2d-1}}\\[2pt]
&\gtrsim c\tau_{\nu}^{-\frac{d-1}{d}}|G|^{\frac{2d-2}{2d-1}}\big[\L^d(M_\chi\cap B^+_{\gamma r_i/5}(z_i))\big]^{\frac{2d-2}{2d-1}}\,,
\end{split}
\end{align}
so we are left with the subcase 
\begin{equation}\label{small_perimeter}
|D\chi|(B^+_{r_i/5}(z_i)) \leq c  (r_i/5)^{d-1}\,.
\end{equation}
In this case,
\begin{align}\label{small_volume}
\begin{split}
\L^d(M_\chi\cap B_{r_i/5}(z_i))&\leq \L^d(M_\chi\cap B_{r_i}(z_i))=\tau_{\nu}(\tau_{\nu}^{-\frac{1}{2d}}|G|^{-1}r_i^{-1/2})r_i^{d}\\
&\leq \tau_{\nu} r_i^d\leq c r_i^d\,.
\end{split}
\end{align} 
by choosing $\tau_\nu\leq c$. In particular, \eqref{small_perimeter} and \eqref{small_volume} imply that the condition \eqref{small_volume_and_perimeter_of_minority} of Proposition \ref{elastic_lower_bound_on_half_balls} is satisfied in this case (for the spherical cap $B^+_{r_i/5}(z_i)$). Hence, by using again the second case in \eqref{eq: cases_for_R_x} and  \eqref{inverse_volume_bounds}, we infer that
\begin{align}\label{eq: 3_pf_of_energy_lower_bound}
\E\big((u,\chi);B^+_{r_i/5}(z_i)\big)&\geq \int_{B^+_{r_i/5}(z_i)}|\nabla u- \chi G|^2\,\mathrm{d}x \geq c r_i^{-d}|G|^2\L^d(M_\chi\cap B^+_{\gamma r_i/5}(z_i))^2\nonumber\\[2pt]
&\geq c\tau_\nu |G|^{2-\frac{2d}{2d-1}}\L^d(M_\chi\cap B_{r_i}(z_i))^{-\frac{2d}{2d-1}}\L^d(M_\chi\cap B^+_{\gamma r_i/5}(z_i))^2\nonumber\\[2pt]
&\geq c\tau_\nu (c_{1,d,\nu})^{\frac{2d}{2d-1}} |G|^{\frac{2d-2}{2d-1}}\big[\L^d(M_\chi\cap B^+_{\gamma r_i/5}(z_i))\big]^{\frac{2d-2}{2d-1}}\,.
\end{align}
Finally, choosing $\tau_{\nu}=c=c(d,\nu)$, by \eqref{eq: 1_pf_of_energy_lower_bound} with \eqref{eq: c_1_2_d_nu_explicit}, \eqref{eq: 2_pf_of_energy_lower_bound} and \eqref{eq: 3_pf_of_energy_lower_bound} imply altogether that in all possible cases \eqref{inverse_energy_bounds} holds true, with a constant $c_{2,d,\nu}$ which can be made explicit, namely
\begin{align*}
c_{2,d,\nu}&\sim\min\big\{(c_{1,d,\nu})^{\frac{d-1}{d}},c\tau_{\nu}^{-\frac{d-1}{d}}, c\tau_\nu (c_{1,d,\nu})^{\frac{2d}{2d-1}}\big\}\\[3pt]
&\sim \min\big\{\gamma^{d-1},c^{1/d}, c^2\gamma^\frac{2d^2}{2d-1}\big\}=c^2\gamma^\frac{2d^2}{2d-1}\,,
\end{align*} 
for which we recall Remark \ref{rem:power_law_lower} and that $\gamma\sim\nu_1$. This completes the proof of \eqref{inverse_energy_bounds} with the constant of \eqref{eq: c_2_d_nu_explicit}, and thus of the lower bound. \qed\\[-8pt] 
\end{step}

\begin{remark}\label{rem_lower_power_law}
\normalfont 
For every $\nu\in \S^{d-1}$, $G=a\otimes \nu\in \mathcal{R}_1(d)$ and $\chi\in BV(\R^d_+;\{0,1\})$ let us set (recalling \eqref{eq: rescaled_energy}),
\[\mathcal{E}^{\nu}(\chi):=\underset{u\in H^1(\R^d_+;\R^d)}{\inf}\bigg(\int_{\R^d_+}|\nabla u-\chi G|^2\bigg)+|D\chi|(\R^d_+)\,, \ \text{and } E^{\nu}(\mu):=\underset{\int_{\R^d_+} \chi=\mu}{\inf}\mathcal{E}^{\nu}(\chi)\,.\]
In the case $\mu\geq |G|^{-2d}$, 
we obtained a lower bound of the form 
\begin{equation}\label{eq:scaling_law_forE_nu}
E^{\nu}(\mu)\geq C_1(d,\nu)|G|^\frac{2d-2}{2d-1}\mu^{\frac{2d-2}{2d-1}}\,.
\end{equation}
Without even tracking the explicit dependence of the constants appearing in the proofs on $\nu$, it is 
clear that such a constant should have the property that
\begin{equation}\label{eq: C_1_goes_to_0_as_nu_to_0}
C_1(d,\nu)\to 0 \ \ \mathrm{as} \ \ \nu\to \pm e_d\,.
\end{equation}
Indeed, let $(\nu_k)_{k\in \N}\subset \S^{d-1}\setminus\{\pm e_d\}$ be such that $\nu_k\to \pm e_d$ as $k\to \infty$, and up to passing to a subsequence we may assume that 
\[\lim_{k\to \infty} C_1(d,\nu_k) \text{ exists in } [0,+\infty]\,.\] 
Let also $(\chi_j)_{j\in \N}$ be an infimising sequence for $E^{\pm e_d}(\mu)$. Then, using also \eqref{eq: rescaled_specific_direction_energy_scaling} (whose proof will be given in Section \ref{sec:reflection}), we have

\begin{align*}
\mu^{\frac{3d-3}{3d-1}}&\gtrsim E^{\pm e_d}(\mu)=\lim_{j\to \infty}\E^{\pm e_d}(\chi_j)=\lim_{j\to\infty}\lim_{k\to\infty} \mathcal{E}^{\nu_k}(\chi_j)\\[3pt]
&\geq \lim_{j\to\infty}\lim_{k\to\infty} E^{\nu_k}(\mu) = \lim_{k\to\infty} E^{\nu_k}(\mu)\geq \big(\lim_{k\to\infty} C_1(d,\nu_k)\big)\mu^{\frac{2d-2}{2d-1}}\,,
\end{align*} 
\textit{i.e.},
\begin{equation*}
\lim_{k\to\infty} C_1(d,\nu_k)\lesssim \mu^{\frac{-(d-1)}{(2d-1)(3d-1)}}\,,
\end{equation*} 
and by the arbitrariness of $\mu\geq |G|^{-2d}$ and the (sub)-sequence $(\nu_k)_{k\in \N}$ we infer \eqref{eq: C_1_goes_to_0_as_nu_to_0}. 

In particular, for $\nu=\nu_1e_1+\nu_d e_d$ with $\nu_1>0$, (without restriction), collecting \eqref{eq: c_1_2_d_nu_explicit}, \eqref{eq: c_2_d_nu_explicit}, \eqref{eq_lower_bound_pf}, \eqref{eq:explicit_c_d_nu} and \eqref{eq: choice_of_theta_gamma}, and reasoning as in Remark \ref{rem:power_law_lower} we obtain an explicit dependence 
\begin{equation}\label{eq:explicit_lower_bound_constant}
C_1(d,\nu)\sim c^2\gamma^\frac{2d^2}{2d-1}(\gamma^d)^{\frac{2(d-1)}{2d-1}}=c^2\gamma^{2d}\sim \begin{cases}
	\dist^6(\nu,\{\pm e_d\})\qquad  \mathrm{ if }\ d=2\,,\\[3pt]
	\dist^{2d+6}(\nu,\{\pm e_d\})\  \ \mathrm{ if }\ d\geq 3\,.
\end{cases}
\end{equation}
Although this dependence is immediate by careful inspection of the proof, we are not aware of the optimal constant for the lower bound inequality in \eqref{eq:scaling_law_forE_nu} in terms of scaling with respect to $\nu_1$. Actually, as we will see in the next subsection, the constant in the upper bound construction also degenerates to $0$ as $\nu\to\pm e_d$, but with a different (much slower) power law. 
\end{remark}
\subsection{The upper bound}\label{sec:4.2}
The proof of the upper bound in the case of small inclusions, \textit{i.e.,} when $\mu\leq |G|^{-2d}$, relies on essentially the same construction for the upper bound in \cite[Theorem 3.6]{knupfer2011minimal}, here in the case of full gradients and for inclusions supported on $\R^d_+$. For inclusions with volume $\mu\geq |G|^{-2d}$, the construction is more delicate, to take into account also the role of the direction of the rank-1 connection.

\subsubsection {The case of small inclusions}\label{upb_small_incl}
For an inclusion of volume $\mu\leq |G|^{-2d}$, take $\chi_R:=\chi_{B^+_R}$ (centered at 0), with $R=(\frac{2}{\omega_d})^{1/d}\mu^{1/d}$ and consider the displacement field $u\in C^\infty_c(\R^d_+;\R^d)$ defined by $u_R(x):=\zeta_R(x)Gx$\,, where $\zeta_R\in C^\infty_c(\R^d;\R_+)$ is a smooth cut-off function such that
\[0\leq \zeta_R\leq 1\,, \quad \zeta_R|_{B_R}\equiv 1\,, \quad \zeta_R|_{\R^d\setminus \overline{B}_{2R}}\equiv 0\,, \quad \|\nabla \zeta_R\|_{L^\infty(\R^d)}\lesssim 1/R\,.\]
Noting that $\nabla u_R(x)=\zeta_R(x)G+Gx\otimes \nabla\zeta_R(x)$, that $\zeta_R\equiv \chi_R$ in $B_R^+\cup (\R^d_+\setminus \overline{B}^+_{2R})$ and that $\nabla \zeta_R\equiv 0$ on the same set, we calculate
\begin{align}\label{1_energy_upper_bound}
\begin{split}
E(\mu)&\leq  \int_{\R^d_+}|\nabla u_R-\chi_R G|^2\,\mathrm{d}x+|D\chi_R|({\R^d_+})\\[4pt]
&=\int_{B^+_{2R}\setminus B^+_R}\big|(\zeta_R-\chi_R)G+Gx\otimes \nabla \zeta_R(x)\big|^2\,\mathrm{d}x+|D\chi_R|(\R^d_+)\\[4pt]
&\lesssim \big(\|\zeta_R\|^2_{L^\infty}+\|\chi_R\|^2_{L^\infty}+R^2\|\nabla\zeta_R\|^2_{L^\infty}\big) |G|^2\L^d(B^+_{2R}\setminus B^+_{R})+|D\chi_R|(\R^d_+)\\[4pt]
&\lesssim |G|^2 R^d+R^{d-1}\sim (1+|G|^2\mu^{1/d})\mu^{\frac{d-1}{d}}\lesssim \mu^{\frac{d-1}{d}}\,,
\end{split}
\end{align}
where in the last step we used the fact that $\mu\leq |G|^{-2d}$. Hence, \eqref{1_energy_upper_bound} gives the energy upper bound in the first case of \eqref{eq: rescaled_generic_energy_scaling} and \eqref{eq: rescaled_specific_direction_energy_scaling}.

\subsubsection{The case of large inclusions (two-dimensional construction)}	
In this subsection, and for an inclusion of volume $\mu\geq |G|^{-2d}$, we provide a piecewise affine construction whose energy contribution matches the lower scaling bounds found in the previous subsection in terms of $\mu$, despite the different scaling of the multiplicative prefactor in terms of $\nu$.
Our construction takes inspiration from the one in \cite[Section 4.2]{Tribuzio-Rueland_1} and can be seen as a piecewise affine version of that in \cite{knupfer2011minimal}.  

Recalling the notation in \eqref{eq: rescaled_energy}--\eqref{eq: rescaled_minimal_energy} and for presentation simplicity, we first give the argument in two dimensions and then generalize it for any $d\ge2$.

Let us denote again $\nu:=(\nu_1,\nu_2)\neq \pm e_2$ the rank-1 direction and $\nu^\bot:=(-\nu_2,\nu_1)$ the corresponding orthogonal vector. Without loss of generality, (unlike our working assumption in Step 2 of the proof of Proposition \ref{elastic_lower_bound_on_half_balls}), we will assume here without restriction that $\nu_1<0\le \nu_2$.
 
\medskip

\textbf{Definition of the inclusion domain:}
Let $1<H\le L$ be two parameters which respectively represent the thickness and the length of the thin inclusion domain.
We define the set
\begin{equation}\label{eq:domain}
M:=\Big\{z_1\nu^\bot+z_2 \nu : |z_1|< L, |z_2|< H-\frac{H}{L}|z_1|\Big\}\cap\R^2_+\,.
\end{equation}
In particular $M$ is obtained as the portion in the upper half-plane of a rhombus centered at $0$, with diagonals in the $\nu$ and $\nu^\bot$ direction, of length $2H$ and $2L$ respectively. 

\begin{figure}[htb]
\begin{center}
\includegraphics{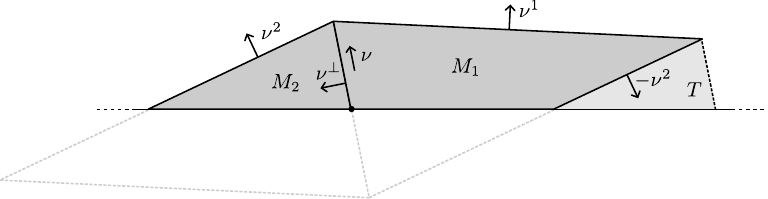}
\caption{A representation of the inclusion domain $M$ and of the outer normal vectors to $M_1$ and $M_2$.}
\label{fig:2d-up1}
\end{center}
\end{figure}

For later use, we also denote the subdomains 
\[M_1:=M\cap\{x\in\R^2_{+} : x\cdot\nu^\bot<0\} \ \text{and } M_2:=M\setminus\overline{M_1}\,.\]
The outer normal vector to $\partial M_1\cap\R^2_+$ and $\partial M_2\cap\R^2_+$ takes values respectively in $\{\nu^1, -\nu^2, \nu^\bot\}$ and $\{\nu^2, -\nu^\bot\}$, where
$$
\nu^1:=c\Big(\nu-\frac{H}{L}\nu^\bot\Big),
\quad
\nu^{2}:=c\Big(\nu+\frac{H}{L}\nu^\bot\Big)\,,
$$
with $c:=\big(1+\frac{H^2}{L^2}\big)^{-\frac{1}{2}}$, see Figure \ref{fig:2d-up1}.

\medskip

\textbf{Choice of the deformation gradients:}
Let $v_0:M\to\R^2$ be a piecewise affine function with
$$
\nabla v_0(x)=\begin{cases}
G_1 & \text{if } x\in M_1\,, \\
G_2 & \text{if } x\in M_2\,.
\end{cases}
$$
We choose $G_1$ and $G_2$ such that $v_0$ is continuous and attains zero value on the upper part of the boundary of $M$, \textit{i.e.}, 
\[v_0\equiv0 \text{ on } \partial M\cap\{x\in\R^2_+ :  x\cdot \nu\ge 0 \}\,.
\]
By Hadamard's jump condition this amounts to finding $a^1,a^2,a^3\in \R^2\setminus\{0\}$ such that
$$
G_1=a^1\otimes \nu^1,
\quad
G_2=a^2\otimes \nu^2,
\quad
G_2-G_1=a^3\otimes\nu^\bot\,.
$$
The latter is solved by $a^2=a^1$ and $a^3=2c\frac{H}{L}a^1$.
Hence, recalling that $G=a\otimes \nu$, we define the matrices
\begin{equation*}
G_1:=\frac{1}{c}a\otimes \nu^1\,, \quad
G_2:=\frac{1}{c}a\otimes \nu^2\,.
\end{equation*}
The choice $a^1=\frac{1}{c}a$ minimises (in terms of scaling in $H$ and $L$) the quantities $|G_1-G|$ and $|G_2-G|$ and keeps the computations simple.
With these choices, the deformation $v_0$ reads
\begin{equation}\label{eq: v:0}
v_0(x):=\begin{cases}
G_1x-Ha & \text{if } x\in M_1\,, \\
G_2x-Ha & \text{if } x\in M_2\,.	 
\end{cases}
\end{equation}

\medskip

\textbf{Extension outside the domain:}
We now extend $v_0$ outside $M$.
Since $v_0\equiv 0$ on $\partial M\cap\{x\in\R^2_+ : x\cdot \nu\ge0\}$, we are left with extending $v_0$ across the boundary \[\partial M\cap\{x\in\R^2_+ : x\cdot \nu\le0\}\,.\]
We then look for $\tilde G\in\R^{2\times 2}$ that is rank-1 connected to $G_1$ in the direction $\nu^2$ and to $0$ in some direction $\tilde \nu\neq \pm\nu$, to ensure that this extension has compact support.
Thus, we require 
\[\tilde G=\tilde a \otimes\tilde \nu \ \text{ and } G_1-\tilde G=a^4\otimes \nu^2\,,\] for some $\tilde a, a_4 \in\R^2\setminus\{0\}$ and $\tilde \nu \in\S^1\setminus\{\pm\nu\}$.
This is solved by $\tilde a=-\frac{2H}{L}a$, $a^4=\frac{1}{c}a$ and $\tilde \nu=\nu^\bot$, \textit{i.e.},
\begin{equation*}
\tilde G := -\frac{2H}{L}a\otimes\nu^\bot\,.
\end{equation*}
Via \eqref{eq: v:0}, we define $v:\R^2_+\to\R^2$ as
\begin{equation}\label{eq:v_deformation}
v(x):=\begin{cases}
v_0(x) & \text{if } x\in M\,, \\[2pt]
\tilde Gx- 2Ha & \text{if } x\in T\,, \\[2pt]
0 & \text{otherwise}\,,
\end{cases}
\end{equation}
where $T$ is the set
$$
T:=\Big\{-z_1\nu^\bot+z_2 \nu : 0<z_1<L, z_2< -H+\frac{H}{L}z_1\Big\}\cap\R^2_+\,.
$$
Eventually, via \eqref{eq:v_deformation}, we define the deformation  $u\in W^{1,\infty}(\R^2_+;\R^2)$ as 
\begin{equation}\label{eq: v_def}
u(x):=\zeta(x)v(x)\,,
\end{equation}
where $\zeta\in C^\infty_c(B_{4L};[0,1])$ is a cut-off function such that 
\begin{equation}\label{eq: smooth_cut_off}
\zeta\equiv1 \text{ in } B_{2L},\ \ \zeta\equiv0  \text{ in } \R^2\setminus\overline{B}_{4L}\, \ \ \text{and } \ \|\nabla\zeta\|_{L^\infty}\le \frac{1}{L}\,.
\end{equation}

Notice that this additional cut-off is needed only for the cases $\nu_2\ll 1$, in particular it does not affect the deformation (\textit{i.e.}, $u\equiv v$) when $\nu_2>\frac{1}{2}$.

\medskip

\textbf{Energy contribution:}
Now we compute $\mathcal{E}(u,\chi)$ for $u$ as in \eqref{eq: v_def} and $\chi:=\chi_M$ under the constraint $\|\chi\|_{L^1(\R^d_+)}=\mu\geq |G|^{-2d}$.
Preliminarily, we notice that
\begin{equation*}\label{eq:volume1}
\mu = \L^2(M) = HL\,,
\end{equation*}
and that
\begin{equation}\label{eq:volume2}
\L^2(T\cap B_{4L}) 
\leq  
\theta(\nu)L^2,	\ \text{with } \theta(\nu):=\min\Big\{\frac{1}{2\nu_2},4\Big\}|\nu_1|
\,.
\end{equation}
Indeed, the vertices of $T$ are 
\[P_1=(L\nu_2,-L\nu_1),\ P_2=\Big(\frac{HL}{H\nu_2-L\nu_1}
,0\Big), \ \text{ and } P_3=(\frac{L}{\nu_2},0)\,,\]
which yields that 
\[\L^2(T)= \frac{L^3\nu_1^2}{2\nu_2(H\nu_2+L|\nu_1|)}\le\frac{|\nu_1|}{2\nu_2}L^2\,.\]
Noticing additionally that $B_{4L}\cap T\subset B_{4L}\cap([0,4L]\times[0,L|\nu_1|])$ and that 
\[\L^2(T\cap B_{4L})\le\min\big\{\L^2(T),\L^2\big([0,4L]\times[0,L|\nu_1|]\big)\big\}\,,\] \eqref{eq:volume2} follows.
We compute the surface and the elastic energy separately.
For the former, it holds
\begin{equation}\label{eq:surf-en}
|D\chi|(\R^2_+) = \mathcal{H}^1(\partial M\cap \R^2_+) = 2(H^2+L^2)^\frac{1}{2} \le 3L\,.
\end{equation}
For the latter, recalling \eqref{eq: v:0}--\eqref{eq: v_def}, noticing that $\|v\|_{L^\infty}\le 4 H|a|$, which follows from the fact that 
\begin{align*}
G_1x=(x\cdot \nu-\tfrac{H}{L}x\cdot\nu^\bot)a\,, \ G_2x=(x\cdot \nu+\tfrac{H}{L}x\cdot\nu^\bot)a\,, \ \tilde Gx=-\frac{2H}{L}(x\cdot \nu^\bot)a\,,
\end{align*}
and that $|x\cdot \nu|\leq H, |x\cdot \nu^\bot|\leq L$ for all $x\in M\cup T$,  by \eqref{eq:volume2} and that $M\subset B_{2L}$, we have
\begin{equation}\label{eq:el-en}
\begin{split}
\int_{\R^2_+} |\nabla u-\chi G|^2 &= \int_M |\nabla v-G|^2 + \int_{T\cap B_{2L}} |\nabla v|^2 + \int_{(T\cap B_{4L})\setminus B_{2L}}|\nabla u|^2 \\[2pt]
&\le \L^2(M_1)|G_1-G|^2+\L^2(M_2)|G_2-G|^2 + \L^2(T\cap B_{\GGG 2L})|\tilde G|^2\\[2pt]
&\quad  + \L^2\big((T\cap B_{4L})\setminus B_{2L}\big)\|\nabla\zeta\|_{L^\infty(\R^2_+)}^2\|v\|_{L^\infty(\R^2_+)}^2\\[2pt]
&\le \Big(\frac{H^3}{L}+32|\nu_1|H^2\Big)|a|^2\,,
\end{split}
\end{equation}
where we have additionally used that $G_2-G=-(G_1-G)=\frac{H}{L}a\otimes \nu^\bot$.
Gathering \eqref{eq:surf-en} and \eqref{eq:el-en}, we obtain
$$
E(\mu) \le C\Big(|a|^2\frac{H^3}{L}+|a|^2|\nu_1|H^2+L\Big),
$$
with $\mu=HL$, for some universal constant $C>1$.
We now optimize the parameters $H$ and $L$ in terms of $\mu$.
We distinguish two cases:
\begin{itemize}
\item[(i)] If $\frac{H^3}{L}>|\nu_1|H^2\iff H>|\nu_1|L$, then 
\[E(\mu)\le C\Big(2|a|^2\frac{H^3}{L}+L\Big)=C\bigg(2|a|^2\frac{\mu^3}{L^4}+L\bigg)\,.\]
The right-hand side is optimized as $L=2^{3/5}|a|^{2/5}\mu^{3/5}$, from which we obtain 
\[E(\mu) \lesssim |a|^{2/5}\mu^{3/5}\,.\]
\item[(ii)] If $\frac{H^3}{L}\le |\nu_1|H^2\iff H\leq |\nu_1|L$,  then \[E(\mu)\le C\big(2|a|^2|\nu_1|H^2+L\big)=C\bigg(2|a|^2|\nu_1|\frac{\mu^2}{L^2}+L\bigg)\,.\]
The right-hand side is now optimized as $L=2^{2/3}|a|^{2/3}|\nu_1|^{1/3}\mu^{2/3}\,,$ from where we obtain 

\[E(\mu) \lesssim |a|^{2/3}|\nu_1|^{1/3}\mu^{2/3}\,.\]
\end{itemize}
Eventually, with these choices of $H$ and $L$, we collect all the previous computations in the following statement.

\begin{proposition}\label{prop:ub2D}
Let $\nu\in\S^1\setminus\{\pm e_2\}$ and $\mu>|G|^{-4}$. 
There exists a universal constant $C>1$ such that
\begin{equation}\label{eq: upper_bound_2dim}
E(\mu) \le C \begin{cases}
|G|^{2/5} \mu^{3/5}\,, \ \quad \qquad \text{ if } |\nu_1|\leq (\frac{|G|^{-4}}{\mu})^{1/5}\\[5pt]
|\nu_1|^{1/3}|G|^{2/3}\mu^{2/3} \,, \ \ \text{ if } |\nu_1|\geq (\frac{|G|^{-4}}{\mu})^{1/5}
\end{cases}\,.
\end{equation}
\end{proposition}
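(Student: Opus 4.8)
The construction required for the proposition has, in fact, already been assembled above: the inclusion $\chi:=\chi_M$ with $M$ as in \eqref{eq:domain}, the piecewise affine field $v_0$ on $M$, its compactly supported extension $v$ through the ``tail'' region $T$, and the truncated deformation $u:=\zeta v$ of \eqref{eq: v_def}. So the plan is simply to collect the resulting energy estimate and then to optimize the two free parameters $1<H\le L$. First, $\chi=\chi_M$ is admissible for volume $\mu$ precisely when $\L^2(M)=HL=\mu$, which still leaves the aspect ratio $H/L$ free. For the surface energy one has $|D\chi|(\R^2_+)=2(H^2+L^2)^{1/2}\le 3L$ as in \eqref{eq:surf-en}, using $H\le L$. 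For the elastic energy one splits $\R^2_+$ into $M_1$, $M_2$, $T\cap B_{2L}$ and $(T\cap B_{4L})\setminus B_{2L}$ and bounds the four pieces by $\L^2(M_i)|G_i-G|^2$, $\L^2(T\cap B_{2L})|\tilde G|^2$ and $\L^2((T\cap B_{4L})\setminus B_{2L})\|\nabla\zeta\|_{L^\infty}^2\|v\|_{L^\infty}^2$, respectively; inserting the explicit identities $G_i-G=\pm\tfrac{H}{L}a\otimes\nu^\bot$, $\tilde G=-\tfrac{2H}{L}a\otimes\nu^\bot$, $\|v\|_{L^\infty}\le 4H|a|$ and $\|\nabla\zeta\|_{L^\infty}\le 1/L$, together with the area bound $\L^2(T\cap B_{4L})\le\theta(\nu)L^2$ from \eqref{eq:volume2}, yields \eqref{eq:el-en}, i.e.\ $\int_{\R^2_+}|\nabla u-\chi G|^2\lesssim (H^3/L+|\nu_1|H^2)|a|^2$. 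Summing, $\mathcal{E}(u,\chi)\le C(|a|^2 H^3/L+|a|^2|\nu_1|H^2+L)$ for a universal $C>1$.

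It then remains to optimize under $HL=\mu$. Substituting $H=\mu/L$, the elastic part becomes $|a|^2\mu^3/L^4+|a|^2|\nu_1|\mu^2/L^2$, and which of the two monomials dominates is governed by the sign of $H-|\nu_1|L$. In the regime $H>|\nu_1|L$ one keeps the $H^3/L$ term and minimizes $|a|^2\mu^3/L^4+L$; the optimal choice is $L\sim|a|^{2/5}\mu^{3/5}$, giving $E(\mu)\lesssim|a|^{2/5}\mu^{3/5}=|G|^{2/5}\mu^{3/5}$. In the regime $H\le|\nu_1|L$ one keeps the $|\nu_1|H^2$ term and minimizes $|a|^2|\nu_1|\mu^2/L^2+L$; now the optimal choice is $L\sim|a|^{2/3}|\nu_1|^{1/3}\mu^{2/3}$, giving $E(\mu)\lesssim|\nu_1|^{1/3}|G|^{2/3}\mu^{2/3}$. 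Plugging these optimal values of $L$ back into the defining inequality (either $H>|\nu_1|L$, equivalently $L^2<\mu/|\nu_1|$, or $H\le|\nu_1|L$) translates it exactly into the condition that $|\nu_1|$ be smaller, resp.\ larger, than $(|G|^{-4}/\mu)^{1/5}$, which is the dichotomy in \eqref{eq: upper_bound_2dim}.

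Finally one must check that the standing requirement $1<H\le L$ of the construction is met by these optimal parameters. Computing $H/L$ at the optimum in each regime and using $\mu>|G|^{-4}$ gives $H\le L$ (and the two regimes coincide on the threshold $|\nu_1|=(|G|^{-4}/\mu)^{1/5}$, so consistency there is automatic), and similarly $H>1$. The step that genuinely requires care — and where the geometry of the construction does the work — is the treatment of the outer region $T$ together with the cut-off $\zeta$: when $\nu_2\ll 1$ the set $T$ is long and thin, and it is precisely the estimate $\L^2(T\cap B_{4L})\le\theta(\nu)L^2$ with $\theta(\nu)\sim|\nu_1|$ that keeps both the $|\tilde G|^2$-contribution on $T\cap B_{2L}$ and the $\|\nabla\zeta\|_{L^\infty}^2\|v\|_{L^\infty}^2$-contribution on the annular part $(T\cap B_{4L})\setminus B_{2L}$ at the same order $|\nu_1|H^2|G|^2$ as the interior twinning energy, so that truncation does not degrade the scaling; when $\nu_2>1/2$ the cut-off is inactive and $u\equiv v$. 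Everything else is elementary algebra.
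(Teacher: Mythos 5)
Your proposal follows essentially the same route as the paper: the same rhombus inclusion $M$ with $\mu=HL$, the same piecewise affine field and tail extension with cut-off, the same surface and elastic energy estimates from \eqref{eq:surf-en} and \eqref{eq:el-en}, and the same two-case optimization in $L$ under the dichotomy $H\gtrless|\nu_1|L$. Your extra bookkeeping — checking that $1<H\le L$ holds at the optimum and deriving the threshold $|\nu_1|=(|G|^{-4}/\mu)^{1/5}$ from back-substitution — is correct and usefully spells out details the paper leaves implicit.
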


\subsubsection{Large inclusions in general dimension}

We now generalize the construction to every dimension.
Let $\{\nu^\bot_1,\dots,\nu^\bot_{d-1}\}$ be an orthonormal base of $\nu^\perp$ with $\nu^\bot_1\in\mathrm{span}\{\nu,e_d\}$.
Again, we consider $1<H\le L$ and define
\begin{multline}\label{eq:def_M_higher_dim}
M := \Big\{z_1\nu^\bot_1+\dots+z_{d-1}\nu^\bot_{d-1}+z_d \nu : \\
|z_1|+\dots+|z_{d-1}|<L, \, |z_d|<H-\frac{H}{L}(|z_1|+\dots+|z_{d-1}|)\Big\}\cap\R^d_+.
\end{multline}
We consider a scalar function $w_0:M\to\R$, defined as
\begin{equation}\label{eq:w}
w_0(x) = x\cdot \nu + \frac{H}{L}\big(|x\cdot \nu^\bot_1|+\dots+|x\cdot \nu^\bot_{d-1}|\big)-H\,.
\end{equation}
This is a continuous, piecewise affine function attaining $0$ on $\partial M\cap \{x\in\R^d_+ : x\cdot \nu\ge0\}$ with gradient
\begin{equation}\label{eq:grad-w}
\nabla w_0(x) = \nu+\frac{H}{L}\big(\sgn(x\cdot\nu^\bot_1)\nu^\bot_1+\dots+\sgn(x\cdot\nu^\bot_{d-1})\nu^\bot_{d-1}\big)\,.
\end{equation}
We extend $w_0$ to a function $w$ outside $M$. On $\{x\in\R^d_+ : x\cdot \nu\ge0\}\setminus M$ we can extend it by $0$.
On $\{x\in\R^d_+ : x\cdot \nu\le0\}\setminus M$ instead, we extend it constantly in direction $\nu$, \textit{i.e.}, for every $x_0\in\partial M\cap\{x\in\R^d_+ : x\cdot \nu\le0\}$ and $t>0$, we define
\begin{align*}
w(x_0-t\nu) := w_0(x_0) &= x_0\cdot \nu+\frac{H}{L}(|x_0\cdot\nu^\bot_1|+\dots+|x_0\cdot\nu^\bot_{d-1}|)-H \\
	&= \frac{2H}{L}(|x_0\cdot\nu^\bot_1|+\dots+|x_0\cdot\nu^\bot_{d-1}|)-2H\,,
\end{align*}
setting $w= 0$ in the remaining portion of $\R^d_+$. 
We have then defined $w:\R^d_+\to\R$ as
\begin{equation}\label{eq:def_w}
w(x):=\begin{cases}
x\cdot \nu + \frac{H}{L}(|x\cdot \nu^\bot_1|+\dots+|x\cdot \nu^\bot_{d-1}|)-H & \text{for } x\in M\,, \\[4pt]
\frac{2H}{L}(|x\cdot \nu^\bot_1|+\dots+|x\cdot \nu^\bot_{d-1}|)-2H & \text{for } x\in T\,, \\[4pt]
	0 & \text{otherwise}\,,
\end{cases}
\end{equation}
where $T$ is the set
$$
T = \{ x\in\R^d_+ : x\cdot \nu\le 0\,,\ x-x\cdot \nu\in M\},
$$
which is the set points that project on $M$ in direction $\nu$.
Eventually, we define the deformation $u:\R^d_+\to\R^d$ as
\begin{equation}\label{eq:def_u_higher_dim}
u(x) := \zeta(x) w(x)a\,,
\end{equation}
with $\zeta$ a smooth cut-off function as that defined in \eqref{eq: smooth_cut_off}.

\medskip

\textbf{Energy contribution:}
We compute $\mathcal{E}(u,\chi)$ for $u$ defined as in \eqref{eq:def_u_higher_dim}, and $\chi:=\chi_M$ as in \eqref{eq:def_M_higher_dim}, with $\|\chi\|_{L^1(\R^d_+)}=\mu$.

We begin by noticing that $\mu=\L^d(M)\sim HL^{d-1}$ and that, similarly to \eqref{eq:volume2}, 
\begin{equation}\label{eq:genDvolume}
\L^d(T)	\lesssim \theta(d,\nu)
L^d,
\ \ \text{where } \theta(d,\nu):=\min\Big\{\frac{1}{|\nu\cdot e_d|},1\Big\}|\nu^\bot_1\cdot e_d|\,.
\end{equation}
The surface energy term is given by
\begin{equation}\label{eq:genDsurf-en}
|D\chi|(\R^d_+)=\mathcal{H}^{d-1}(\partial M\cap\R^d_+)\sim L^{d-1}.
\end{equation}
By \eqref{eq:grad-w} and \eqref{eq:genDvolume} and by noticing that the definition \eqref{eq:def_w} implies $\|w\|_{L^\infty(\R^d_+)}\lesssim H$, for the elastic energy we have
\begin{equation}\label{eq:genDel-en}
\begin{split}
\int_{\R^d_+}|\nabla u-\chi G|^2\dx &= \int_{M}|a\otimes(\nabla w_0(x)-\nu)|^2\dx + \int_{T\cap B_{2L}}|a\otimes\nabla w(x)|^2\dx \\[3pt]		&\qquad + \int_{(T\cap B_{4L})\setminus B_{2L}}|\zeta(x)a\otimes\nabla w(x)+w(x)a\otimes\nabla\zeta(x)|^2\dx\\[3pt]
&\lesssim \big(H^3L^{d-3}+\theta(d,\nu)H^2L^{d-2}\big) |a|^2\,.
	\end{split}
\end{equation}
Thus, combining \eqref{eq:genDsurf-en} and \eqref{eq:genDel-en}, the total energy contribution reads
$$\mathcal{E}(u,\chi) \lesssim L^{d-2}\Big(|a|^2\frac{H^3}{L}+|a|^2\theta(d,\nu)H^2+L\Big)\,.
$$
Again, we split into two cases:
\begin{itemize}
\item[(i)] If $\frac{H^3}{L}>\theta(d,\nu)H^2\iff H>\theta(d,\nu)L$, then
\[E(\mu) \lesssim L^{d-2}\Big(|a|^2\frac{H^3}{L}+L\Big)\sim |a|^2\mu^3L^{-2d}+L^{d-1}\,.\]
The right-hand side is optimized as $L\sim |a|^{2/3d-1}\mu^{3/3d-1}$, from which, we obtain
\[E(\mu) \lesssim |a|^\frac{2d-2}{3d-1}\mu^\frac{3d-3}{3d-1}\,.\]
\smallskip 
\item[(ii)] If $\frac{H^3}{L}\le \theta(d,\nu)H^2\iff H\le \theta(d,\nu)L$, then $$E(\mu)\lesssim L^{d-2}\big(|a|^2\theta(d,\nu)H^2+L\big)\sim |a|^2\theta(d,\nu)\mu^2L^{-d}+L^{d-1}\,.$$
The right-hand side is now optimized as $L\sim |a|^{2/2d-1}\theta(d,\nu)^{1/2d-1}\mu^{2/2d-1}
$, from which we obtain
$$
E(\mu)\lesssim |a|^\frac{2d-2}{2d-1}\theta(d,\nu)^\frac{d-1}{2d-1}\mu^\frac{2d-2}{2d-1}.
$$
\end{itemize}
We recollect the previous computations and the result from Proposition \ref{prop:ub2D} in the following statement. 

\begin{proposition}\label{prop:ub-genD}
Let $d\geq 2$, $\nu\in\S^{d-1}\setminus\{\pm e_d\}$ and $\mu>|G|^{-2d}$, and let

\begin{equation}\label{eq:def_theta_d_nu}
\theta(d,\nu):=\min\Big\{\frac{1}{|\nu\cdot e_d|},1\Big\}|\nu^\bot_1\cdot e_d|\,.
\end{equation}
Then, there exists a dimensional constant $C_d>1$ such that

\begin{equation}\label{eq: upper_bound_higher_dim}
E(\mu) \le C_d\begin{cases} |G|^\frac{2d-2}{3d-1}\mu^\frac{3d-3}{3d-1} \qquad \qquad \quad \text{ if } \theta(d,\nu)\leq (\frac{|G|^{-2d}}{\mu})^{\frac{1}{3d-1}}\\[5pt]
\theta(d,\nu)^\frac{d-1}{2d-1}|G|^\frac{2d-2}{2d-1}\mu^\frac{2d-2}{2d-1} \ \text{ if } \theta(d,\nu)\geq (\frac{|G|^{-2d}}{\mu})^{\frac{1}{3d-1}}\,.
\end{cases}
\end{equation}
\end{proposition}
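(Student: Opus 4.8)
The plan is to establish \eqref{eq: upper_bound_higher_dim} by using the explicit competitor $(u,\chi_M)$ constructed in \eqref{eq:def_M_higher_dim}--\eqref{eq:def_u_higher_dim} and then optimizing the two free parameters $1<H\le L$. First I would settle admissibility: since the choice $\nu^\bot_1\in\mathrm{span}\{\nu,e_d\}$ makes the geometry of $M$ (and of $T$) independent of $\nu$ up to purely dimensional constants, one has $\L^d(M)\sim HL^{d-1}$, so after rescaling $H,L$ by a bounded factor one may assume $\L^d(M)=\mu$ exactly; the function $u$ in \eqref{eq:def_u_higher_dim} is Lipschitz with compact support in $\overline{\R^d_+}$, hence $(u,\chi_M)\in\mathcal{A}(\mu)$ and $E(\mu)\le\mathcal{E}(u,\chi_M)$.

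Next I would record the two energy contributions exactly as in \eqref{eq:genDsurf-en}--\eqref{eq:genDel-en}: the interfacial term is $|D\chi_M|(\R^d_+)=\H^{d-1}(\partial M\cap\R^d_+)\sim L^{d-1}$, and the elastic term splits over $M$, $T\cap B_{2L}$ and $(T\cap B_{4L})\setminus B_{2L}$. On $M$ one has $|\nabla u-G|=|a|\,|\nabla w_0-\nu|\lesssim|a|H/L$ by \eqref{eq:grad-w}; on $T$ the gradient $\nabla w$ is bounded by $\lesssim H/L$ while $\L^d(T\cap B_{4L})\lesssim\theta(d,\nu)L^d$ by \eqref{eq:genDvolume}; on the cut-off annulus one uses $\|w\|_{L^\infty}\lesssim H$ together with $\|\nabla\zeta\|_{L^\infty}\lesssim 1/L$. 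Altogether this gives $\mathcal{E}(u,\chi_M)\lesssim L^{d-2}\big(|a|^2H^3/L+|a|^2\theta(d,\nu)H^2+L\big)$ subject to $HL^{d-1}\sim\mu$ and $|a|=|G|$.

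The optimization is then an elementary two-case computation: eliminating $H\sim\mu L^{1-d}$ and distinguishing $H\gtrless\theta(d,\nu)L$. In the regime $H>\theta(d,\nu)L$ the cubic term dominates, and minimizing $|a|^2\mu^3L^{-2d}+L^{d-1}$ over $L$ yields $L\sim(|a|^2\mu^3)^{1/(3d-1)}$ and $E(\mu)\lesssim|G|^{(2d-2)/(3d-1)}\mu^{(3d-3)/(3d-1)}$; in the regime $H\le\theta(d,\nu)L$ the quadratic term dominates, and minimizing $|a|^2\theta(d,\nu)\mu^2L^{-d}+L^{d-1}$ over $L$ yields $L\sim(|a|^2\theta(d,\nu)\mu^2)^{1/(2d-1)}$ and $E(\mu)\lesssim\theta(d,\nu)^{(d-1)/(2d-1)}|G|^{(2d-2)/(2d-1)}\mu^{(2d-2)/(2d-1)}$. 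Inserting the optimal $(H,L)$ of each regime into the inequality $H\gtrless\theta(d,\nu)L$ defining the cases, one checks that the two regimes are separated precisely by $\theta(d,\nu)\lessgtr(|G|^{-2d}/\mu)^{1/(3d-1)}$, which is the dichotomy in the statement; the case $d=2$ recovers Proposition \ref{prop:ub2D}.

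The main obstacle I expect is not any individual estimate but the bookkeeping of the admissibility constraints $1<H\le L$: for the optimal $L$ in each regime one must verify $\mu\lesssim L^d$ (so that $H\le L$) and $L^{d-1}\lesssim\mu$ (so that $H>1$), and this is exactly where the hypothesis $\mu>|G|^{-2d}$ is used. When $\theta(d,\nu)$ is small the admissible window for $L$ shrinks, and one must confirm that the case-(ii) optimum still lies inside it — or else truncate $L$ at the window's endpoint, which only improves the bound. A secondary technical point is the geometric estimate \eqref{eq:genDvolume} for $\L^d(T)$ in arbitrary dimension (the $\nu$-projection of $M$ onto $\R^d_+$), and checking that $w_0$ is genuinely continuous across the interior affine interfaces, i.e. that Hadamard's compatibility is met there by the very definition \eqref{eq:def_w} and \eqref{eq:grad-w}.
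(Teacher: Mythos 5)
Your proposal reproduces the paper's argument essentially verbatim: same competitor $(u,\chi_M)$ from \eqref{eq:def_M_higher_dim}--\eqref{eq:def_u_higher_dim}, the same three-piece split of the elastic energy with the key volume estimate \eqref{eq:genDvolume} on $T$, the same intermediate bound $\mathcal{E}(u,\chi)\lesssim L^{d-2}(|a|^2H^3/L+|a|^2\theta(d,\nu)H^2+L)$, and the same two-case optimization in $L$ after eliminating $H\sim\mu L^{1-d}$, with the case boundary $H\gtrless\theta(d,\nu)L$ translating precisely into $\theta(d,\nu)\lessgtr(|G|^{-2d}/\mu)^{1/(3d-1)}$. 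The admissibility bookkeeping you flag ($1<H\le L$, the role of $\mu>|G|^{-2d}$) is a fair point the paper glosses over, but both your computation and theirs handle it implicitly in the same way, so this is the same proof.
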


Note that even in the first case of \eqref{eq: upper_bound_higher_dim}, in which 
$$|G|^\frac{2d-2}{3d-1}\mu^\frac{3d-3}{3d-1}\geq \theta(d,\nu)^\frac{d-1}{2d-1}|G|^\frac{2d-2}{2d-1}\mu^\frac{2d-2}{2d-1}\,,$$
using the fact that we are at the regime $\mu>|G|^{-2d}$, we can estimate
\begin{equation}\label{eq: energy_barrier_close_to_e_d}
|G|^\frac{2d-2}{3d-1}\mu^\frac{3d-3}{3d-1}\leq |G|^\frac{2d-2}{2d-1}\mu^\frac{2d-2}{2d-1}\,.
\end{equation} 
 
The proof of the upper bound in \eqref{eq: rescaled_generic_energy_scaling} follows then immediately from \eqref{eq: upper_bound_higher_dim} and \eqref{eq: energy_barrier_close_to_e_d}.

	
\begin{remark}[Dichotomy of scalings]
\normalfont	The scaling from Proposition \ref{prop:ub-genD} shows a dichotomy  in a \emph{quantitative} way, depending on the interaction between the anisotropy direction $\nu$ and the normal $e_d$ to the constraining hyperplane.
Indeed, since \eqref{eq:def_theta_d_nu} gives
\begin{equation}\label{theta_nu_degeneracy}
\lim_{\nu\to \pm e_d}\theta(d,\nu)=0\,,
\end{equation}
by \eqref{eq: upper_bound_higher_dim} we infer the following:  Loosely speaking, when $\dist(\nu,\{\pm e_d\})$ is sufficiently small (in terms of the ratio $|G|^{-2d}/\mu$), then the upper bound for $E(\mu)$ coincides with the nucleation barrier of the three-well case, studied in \cite{Tribuzio-Rueland_1}.
Otherwise, the scaling of the upper bound (in terms of $|G|$ and $\mu$ )is the one of the unconstrained problem studied in \cite{knupfer2011minimal}, \textit{i.e.,} in that case $E(\mu)\sim_{\nu,G} \mu^\frac{2d-2}{2d-1}$. This can also be viewed in the following way:
\begin{itemize}
\item[(i)] If $\nu\in\S^{d-1}\setminus\{\pm e_d\}$ is fixed then, up to multiplicative constants depending on $d$ and $G$, there holds
$$E_\infty(\nu):=\lim_{\mu\to+\infty}\mu^{-\frac{2d-2}{2d-1}}E(\mu) \lesssim \dist(\nu,\{\pm e_d\})^\frac{d-1}{2d-1}\,.$$ 
Taking in the latter the limit as $\nu\to \pm e_d$, and using \eqref{theta_nu_degeneracy}, we get
$$
\lim_{\nu\to \pm e_d} E_\infty(\nu)=0\,.$$
We can interpret this by saying that the scaling of the two-well problem degenerates as $\nu\to \pm e_d$.\\[2pt]
\item[(ii)] If $\mu>|G|^{-2d}$ is fixed, then the scaling of the upper bound in \eqref{eq: upper_bound_higher_dim} is (up to multiplicative prefactors) 
$\mu^\frac{3d-3}{3d-1}$, 
as is the scaling for the three-well problem.
\end{itemize}
\end{remark}

\begin{remark}(Dependence of the energy barrier on $\nu$)
\normalfont As already discussed in Remark \ref{rem_lower_power_law} of the previous subsection, for $\nu\neq \pm e_d$, we see that the lower and upper bounds for the energy barrier $E^{(\nu)}(\mu)$ obey of course the same scaling law in terms of $\mu$, up to multiplicative prefactors that have a different power-law behavior in terms of the rank-1 direction $\nu$. In particular, by \eqref{eq:scaling_law_forE_nu}, \eqref{eq:explicit_lower_bound_constant}, \eqref{eq:def_theta_d_nu} and \eqref{eq: upper_bound_higher_dim}, the method of proof of the lower bound yields a constant $C_1(d,\nu)$ that tends to 0 much faster than the corresponding constant obtained in the upper bound construction. We are tempted to conjecture that the power-law behavior of the latter is the optimal one. 
\end{remark}
 
\section{Proof of Theorem \ref{main_theorem_rescaled}$(ii)$}\label{sec:reflection}
For the proof of the second part of Theorem \ref{main_theorem_rescaled}, we see that by means of a simple reflection argument the problem transforms to a three-well compatible problem in $\R^d$, as in \cite[Theorem 2, Corollary 1.1]{Tribuzio-Rueland_1}. To be more precise, let now $G=a\otimes e_d\in \mathcal{R}_1(d)$ and $(u,\chi)\in \mathcal{A}(\mu)$ (recall \eqref{eq: rescaled_admissible_pairs}). 

We define a pair $(\tilde u,\tilde \chi)\in H^{1}(\R^d;\R^d)\times BV(\R^d;\{-1,0,1\})$ as follows: For every point $x\in \R^d$ and setting $B=\mathrm{diag}(1,\dots,1,-1)\in \R^{d\times d}$, let
\begin{equation}\label{eq: reflected_u}
\tilde u(x):=\begin{cases}
u(x) \quad \ \ \ \text{if } x\in \R_+^d\\
u(Bx) \quad \text{if } x\in \R_-^d\,,
\end{cases}	\implies  \nabla \tilde u(x):=\begin{cases}
\nabla u(x) \quad \ \ \ \ \ \text{if } x\in \R_+^d\\
\nabla u(Bx)B \quad \text{if } x\in \R_-^d\,,
\end{cases}	
\end{equation}
and
\begin{equation}\label{eq: reflected_chi}
\tilde \chi(x):=\begin{cases}
\chi(x) \qquad \ \  \text{if } x\in \R_+^d\\
-\chi (Bx) \quad \text{if } x\in \R_-^d\,.
\end{cases}
\end{equation}	
Recalling \eqref{eq: jump_set_of_chi}, and setting also $J_{\tilde \chi}:=\mathrm{spt}(D\tilde \chi)$, we have by construction
\begin{equation}\label{eq: spt_of_D_refl_chi}
J_{\tilde \chi}:=J_\chi\cup (BJ_\chi)\cup \big(\partial^*\{\chi=1\}\cap \{x_d=0\}\big)\,.
\end{equation} 
Via this extension, the total energy of the pair $(\tilde u, \tilde \chi)$, \textit{i.e.}, 
\begin{equation}\label{eq:total_energy_of_the_extended_pair}
\tilde{\mathcal{E}}(\tilde u,\tilde \chi):=\int_{\R^d}|\nabla \tilde u-\tilde\chi G|^2\,\mathrm{d}x+|D\tilde \chi|(\R^d)
\end{equation}
is controlled by the energy of $(u,\chi)$ in $\R^d_+$ (recall \eqref{eq: rescaled_energy}), as the following lemma indicates.
\begin{lemma}\label{comparison_of_energies}
Let $(\tilde u,\tilde\chi)\in H^{1}(\R^d;\R^d)\times BV(\R^d;\{-1,0,1\})$ be defined via $(u,\chi)$ as in \eqref{eq: reflected_u} and \eqref{eq: reflected_chi}. Then,
\begin{equation}\label{new_energy_between_2_3_of_old}
2\mathcal{E}(u,\chi)\leq \tilde{\mathcal{E}}(\tilde u,\tilde \chi) \leq 4\mathcal{E}(u,\chi)\,.
\end{equation}
\end{lemma}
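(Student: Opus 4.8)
The plan is to expand the two terms of $\tilde{\mathcal E}(\tilde u,\tilde\chi)$ in \eqref{eq:total_energy_of_the_extended_pair} separately, reducing everything to $\mathcal E(u,\chi)$ up to a single term supported on the reflection hyperplane, and then to control that term by the relative perimeter $|D\chi|(\R^d_+)$. For the elastic part, on $\R^d_+$ we have $\nabla\tilde u=\nabla u$ and $\tilde\chi=\chi$ by construction, while on $\R^d_-$ equations \eqref{eq: reflected_u}--\eqref{eq: reflected_chi} give $\nabla\tilde u(x)=\nabla u(Bx)B$ and $\tilde\chi(x)=-\chi(Bx)$. Since $G=a\otimes e_d$ and $B=\mathrm{diag}(1,\dots,1,-1)$ one checks $GB=-G$, hence for $x\in\R^d_-$
\[
\nabla\tilde u(x)-\tilde\chi(x)\,G=\nabla u(Bx)B+\chi(Bx)\,G=\big(\nabla u(Bx)-\chi(Bx)\,G\big)B\,.
\]
As $B^tB=I_d$ preserves the Frobenius norm and $y=Bx$ is a volume-preserving bijection of $\R^d_-$ onto $\R^d_+$, changing variables yields $\int_{\R^d_-}|\nabla\tilde u-\tilde\chi G|^2\,\d x=\int_{\R^d_+}|\nabla u-\chi G|^2\,\d x$, so the elastic part of $\tilde{\mathcal E}$ equals exactly twice that of $\mathcal E$. (The traces of $u$ and of $u(B\,\cdot\,)$ agree on $\{x_d=0\}$, so $\tilde u\in H^1(\R^d;\R^d)$ and $\nabla\tilde u$ carries no mass on that hyperplane.)

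For the interfacial part I would use the decomposition \eqref{eq: spt_of_D_refl_chi} of $J_{\tilde\chi}$. In each open half-space $\tilde\chi$ is, up to the measure-preserving reflection $B$ and a sign, a copy of $\chi$; since $\chi$ takes values in $\{0,1\}$, the restriction of $|D\tilde\chi|$ to each of $\R^d_\pm$ has mass $\H^{d-1}(J_\chi)=|D\chi|(\R^d_+)$. On $\{x_d=0\}$ the one-sided traces of $\tilde\chi$ are $\chi^+$ from above and $-\chi^+$ from below (because $Bx\to x$ as $x\to\{x_d=0\}$ within $\R^d_-$), so the jump there equals $2\chi^+$ and contributes $2\,\mathcal T$, where $\mathcal T:=\int_{\{x_d=0\}}\chi^+\,\d\H^{d-1}=\H^{d-1}\big(\partial^*\{\chi=1\}\cap\{x_d=0\}\big)$; in particular $\mathcal T<\infty$ will make $\tilde\chi\in BV(\R^d;\{-1,0,1\})$. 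Altogether $|D\tilde\chi|(\R^d)=2\,|D\chi|(\R^d_+)+2\,\mathcal T$, and combining with the elastic computation,
\[
\tilde{\mathcal E}(\tilde u,\tilde\chi)=2\,\mathcal E(u,\chi)+2\,\mathcal T\,.
\]
Since $\mathcal T\ge0$, the left inequality in \eqref{new_energy_between_2_3_of_old} is immediate; the right one follows once we show $\mathcal T\le|D\chi|(\R^d_+)$.

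The inequality $\mathcal T\le|D\chi|(\R^d_+)$ — expressing that the part of $\partial^*\{\chi=1\}$ lying on the constraint plane is $\H^{d-1}$-dominated by the perimeter inside $\R^d_+$ — I would prove by slicing in the $e_d$-direction. Writing $x=(x',x_d)$ and $E_{x'}:=\{t>0:(x',t)\in\{\chi=1\}\}$, the slicing theory for $BV$ functions (see, e.g., \cite[Chapter~3]{Ambrosio-Fusco-Pallara:2000}) gives, for $\L^{d-1}$-a.e.\ $x'$, that $E_{x'}$ has finite perimeter in $(0,\infty)$, that the trace of $\chi$ at $(x',0)$ equals the right trace of $t\mapsto\chi(x',t)$ at $0^+$, and that
\[
|D\chi|(\R^d_+)\ \ge\ \int_{J_\chi}|\nu_\chi\cdot e_d|\,\d\H^{d-1}\ =\ \int_{\R^{d-1}}\H^0\big(\partial^*E_{x'}\big)\,\d x'\,.
\]
Now $\L^d(\{\chi=1\})=\mu<\infty$ forces $E_{x'}$ to have finite length for a.e.\ $x'$, while $\chi^+(x')=1$ forces $E_{x'}$ to contain an interval $(0,\delta(x'))$ with $\delta(x')>0$; such an $E_{x'}$ is a nonempty proper subset of $(0,\infty)$, so $\H^0(\partial^*E_{x'})\ge1$. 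Integrating over $\{\chi^+=1\}$ gives $\mathcal T\le\int_{\R^{d-1}}\H^0(\partial^*E_{x'})\,\d x'\le|D\chi|(\R^d_+)$, whence $\tilde{\mathcal E}(\tilde u,\tilde\chi)=2\mathcal E(u,\chi)+2\mathcal T\le4\,\mathcal E(u,\chi)$.

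The genuine obstacle is exactly this last inequality; everything else is routine bookkeeping for the reflection. Note that finiteness of the volume is essential: for $\{\chi=1\}=\R^d_+$ one would have $\mathcal T=+\infty$ but $|D\chi|(\R^d_+)=0$, so no such bound can hold without the volume constraint.
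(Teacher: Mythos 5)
Your argument is correct, and for the elastic and book-keeping parts it coincides with the paper's: the change of variables with $GB=-G$ showing the elastic energy doubles, the decomposition of $|D\tilde\chi|$ into the reflected copies of $J_\chi$ plus the jump on $\{x_d=0\}$, and the reduction of both inequalities to showing
\[
\H^{d-1}\big(\partial^*\{\chi=1\}\cap\{x_d=0\}\big)\ \le\ |D\chi|(\R^d_+)\,.
\]
Where you genuinely diverge is in the proof of this trace-versus-perimeter inequality. You slice \emph{perpendicular} to the constraint hyperplane, \emph{i.e.}, in the $e_d$-direction: by Vol'pert's theorem $|D\chi|(\R^d_+)\ge\int_{\R^{d-1}}\H^0(\partial^*E_{x'})\,\d x'$, while the trace-slicing compatibility together with $\L^1(E_{x'})<\infty$ (from the finite volume) forces $\H^0(\partial^*E_{x'})\ge1$ on $\{\chi^+=1\}$; integrating gives the claim in all dimensions at once. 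The paper instead proceeds by induction on $d$, treating $d=2$ via indecomposable components and then slicing \emph{parallel} to the hyperplane (in the $e_1$-direction) to reduce from $d+1$ to $d$. Both are valid; yours is more direct in that it does a single slicing and bypasses the base case and the induction, at the modest price of invoking the trace-slicing compatibility for $BV$ functions explicitly (which is standard, \emph{cf.} \cite[Chapter 3]{Ambrosio-Fusco-Pallara:2000}). Your remark that the finite-volume hypothesis is essential is also a useful observation that the paper leaves implicit.
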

\begin{proof}
By the above definitions, and since
\[-GB=-(a\otimes e_d)B=-a\otimes (B^te_d)=-a\otimes (-e_d)=a\otimes e_d=G\,,\]  we immediately have:
\begin{align}\label{eq_elastic_energies_estimates}
\int_{\R^d}|\nabla \tilde u-\tilde \chi G|^2\,\mathrm{d}x&=\int_{\R_+^d}|\nabla u-\chi G|^2\,\mathrm{d}x+\int_{\R_-^d}|\nabla  u(Bx)B+\chi(Bx) G|^2\,\mathrm{d}x \nonumber\\
&\overset{y:=Bx}{=} \int_{\R_+^d}|\nabla u-\chi G|^2\,\mathrm{d}x+\int_{\R_+^d}|\nabla  u(y)B-\chi(y) GB|^2\,\mathrm{d}y \nonumber\\
&=2\int_{\R^d_+} |\nabla u-\chi G|^2\,\mathrm{d}x\,.
\end{align}
Regarding the interfacial energy, it is clear by \eqref{eq: spt_of_D_refl_chi} that 
\begin{equation}\label{eq_interfacial_energy_estimates}
2|D\chi|(\R_+^d)=2\mathcal{H}^{d-1}(J_\chi)\leq |D\tilde \chi|(\R^d)=2|D\chi|(\R_+^d)+2\mathcal{H}^{d-1}\big(\partial^*\{\chi=1\}\cap \{x_d=0\}\big)\,,
\end{equation}
and therefore, to finish the proof of the lemma, it suffices to verify that
\begin{equation}\label{minimal_surface_inequality}
\mathcal{H}^{d-1}\big(\partial^*M_\chi\cap \{x_d=0\}\big)\leq \mathcal{H}^{d-1}\big(\partial^*M_{\chi}\cap \{x_d>0\}\big)=|D\chi|(\R^d_+)\,,
\end{equation}
where $M_\chi:=\{\chi=1\}$. In the classical (smooth) setting, the above inequality is simply the assertion that the area-minimising $(d-1)$-dimensional hypersurface in $\R^d$, spanning a given closed $(d-2)$-dimensional planar surface, is the (bounded) flat surface with that given boundary. We give the proof of \eqref{minimal_surface_inequality} in the context of sets of finite perimeter, based on an inductive argument in the dimension $d\geq 2$.

For $d=2$ the inequality is obvious, since for every indecomposable component $P$ of $M_\chi\cap \{x_2>0\}$ (cf. \cite[Example 4.18]{Ambrosio-Fusco-Pallara:2000}) for which $\mathcal{H}^1(\partial^*P\cap \{x_2=0\})>0$, we have that $\partial^*P\cap \{x_2=0\}$ is up to an $\mathcal{H}^1$-null set an interval with endpoints $(a_P,0), (b_P,0)$ with $a_P<b_P$, while $\partial^*P\cap \{x_2>0\}$ is a $1$-rectifiable curve with the same endpoints.

Assume now for the inductive step that \eqref{minimal_surface_inequality} holds in dimension $d$ and let us prove it in dimension $d+1$. For this purpose, let $M\in \mathcal{B}(\R^{d+1})$ be a set of finite perimeter, with $M\subset \{x\in \R^{d+1}\colon x_{d+1}\geq 0\}$. For $t\in \R$, let us set $$M_t:=M\cap \{x_1=t\}\,,$$
and by the general slicing properties of sets of finite perimeter, we have that $M_t$ is a subset of finite perimeter in $\{x_1=t\}$ for $\L^1$-a.e. $t\in \R$. Hence by the coarea formula (cf. \cite[Equation (18.25)]{maggi2012sets}), the inductive hypothesis (for $M_t\subset \{x_1=t,x_{d+1}>0\}$) and Fubini's theorem, we obtain
\begin{align*}
\H^d(\partial^*M\cap \{x_{d+1}>0\})&=\int_{\partial^*M}\chi_{\{x_{d+1}>0\}}\,\mathrm{d}\H^d\geq \int_{\partial^*M}\chi_{\{x_{d+1}>0\}}\sqrt{1-(\nu_{\partial^*M}\cdot e_1)^2}\,\mathrm{d}\H^d\\[4pt]
&=\int_{\R}\mathrm{d}t\int_{(\partial^*M)_t}\chi_{\{x_{d+1}>0\}}\,\mathrm{d}\H^{d-1}\\[4pt]
&=\int_{\R}\H^{d-1}(\partial^*(M_t)\cap \{x_1=t, x_{d+1}>0\})\,\mathrm{d}t\\[4pt]
&\geq \int_{\R}\H^{d-1}(\partial^*(M_t)\cap \{x_{d+1}=0\})\,\mathrm{d}t\\[4pt]
&= \H^{d}(\partial^*M\cap \{x_{d+1}=0\})\,,
\end{align*}
which completes the induction for proving \eqref{minimal_surface_inequality}. In view of \eqref{eq_elastic_energies_estimates}--\eqref{minimal_surface_inequality}, the proof of \eqref{new_energy_between_2_3_of_old} is complete. 
\end{proof}

The proof of Theorem \ref{main_theorem_rescaled}(ii) follows then immediately by combining Lemma \ref{comparison_of_energies} together with \cite[Corollary 1.1 and Remark 4.4]{Tribuzio-Rueland_1}. 

\section*{Acknowledgements} 
AT's work was funded by the Deutsche Forschungsgemeinschaft (DFG,
German Research Foundation) through SPP 2256, project ID 441068247. KZ was supported by the
Deutsche Forschungsgemeinschaft (DFG, German Research Foundation) under Germany’s Excellence Strategy EXC-2044-390685587, "Mathematics Münster: Dynamics-Geometry-Structure",
and currently by the SFB 1060 and the Hausdorff Center for Mathematics
(HCM) under Germany’s Excellence Strategy -EXC-2047/1-390685813.

The authors would also like to thank the Hausdorff Institute for Mathematics at the University of Bonn which is funded by the DFG under Germany’s Excellence Strategy -EXC-2047/1-390685813, as part of the Trimester Program on Mathematics for Complex Materials. 

\appendix
	
 \section{Proof of properties \eqref{properties_of_tilting}}\label{sec: choice_of_cages}

Without seeking to optimise the dependence of the parameters $\theta,c,\gamma$ in $\nu$ (in terms of prefactors) here, and recalling the working assumption $\nu_2\leq 0<\nu_1$, we show the assertion with 
\begin{equation}\label{eq: choice_of_theta_gamma}
\theta=\gamma=\frac{1}{50}\nu_1\leq \frac{1}{50}\,.
\end{equation}
Recalling the notation \eqref{base_cap_endpoints}, let 
\begin{equation*}
l:=|x_5-x_0|\in [2\sqrt{1-\theta^2},2]\,.
\end{equation*}
Moreover, we denote 
\begin{equation}\label{eq: division_points}
x_1:=x_0+\tfrac{l}{6}e_1\,, \ x_2:=x_0+\tfrac{2l}{6}e_1\,, \ 	x_3:=x_0+\tfrac{4l}{6}e_1\,, \ x_4:=x_0+\tfrac{5l}{6}e_1\,,
\end{equation}
and (recalling \eqref{eq: stupid notation}) for every $\sigma\in [2\gamma,4\gamma]$ 
and $i=1,2,3,4$, 
\begin{equation*}
y_{i,\sigma}:=(x_{i}+\R\nu^\bot)\cap \R^h_\sigma\,, \ \  z_{i,\sigma}:=x_i+\big((y_{i,\sigma}-x_i)\cdot e_1\big)e_1\,,
\end{equation*}
the latter being the projection of the vector $y_{i,\sigma}-x_{i}$ onto the $x$-axis, emanating from $x_i$. Finally, for every $i=1,2,3$,  we denote 
\begin{equation}\label{eq: division_parallelograms}
Q^\sigma_{i}:=\mathrm{conv}\big(\{x_{i}, x_{i+1}, y_{i+1,\sigma},  y_{i,\sigma}\}\big) \ \ \text{and}\ \ Q^\sigma:=\bigcup_{i=1}^3 Q^\sigma_{i}\,.
\end{equation}
\begin{figure}[t]
\begin{center}
\includegraphics{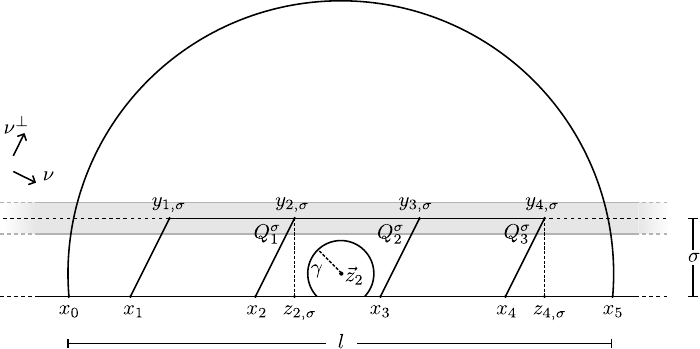}
\caption{Building the \emph{tilted cages}.}
\label{fig:cage2}
\end{center}
\end{figure}
By the simple geometry indicated in Figure \ref{fig:cage2} and recalling that $(\vec{z}_2)_1=0$, in order to ensure that $B_\gamma^+(\vec{z}_2)\subset Q_{2}^\sigma$, it is enough to verify that
\begin{equation}\label{eq: verification_of_inclusion}
\gamma<\min\{|z_{2,\sigma}|, |x_3|\}=|z_{2,\sigma}|=\frac{l}{6}-\big(\frac{-\nu_2}{\nu_1}\big)\sigma=\frac{l}{6}-\frac{\sqrt{1-\nu_1^2}}{\nu_1}\sigma\,.
\end{equation}
For the verification of \eqref{eq: verification_of_inclusion}, recalling the choice in \eqref{eq: choice_of_theta_gamma} and since $0<\nu_1\leq 1$, note that
\begin{align*}
\frac{l}{6}-\frac{\sqrt{1-\nu_1^2}}{\nu_1}\sigma&\geq \frac{1}{3}\sqrt{1-\theta^2}-\frac{4\gamma}{\nu_1}=  \frac{1}{3}\sqrt{1-\frac{\nu_1^2}{ 2500}}-\frac{4}{50}\\
&=\frac{1}{50}\big(\frac{1}{3}\sqrt{ 2499}-4\big)> \frac{1}{50}\geq \gamma\,.
\end{align*}
Similarly, to ensure that $Q^\sigma\subset B_1^+(\vec{z}_2)$, it is enough to check that
\begin{equation}\label{eq: verification_of_second_inclusion}
|z_{4,\sigma}|<\min\Big\{\frac{l}{2},\sqrt{1-\sigma^2}\Big\}=\sqrt{1-\sigma^2}\,.
\end{equation} 
Indeed,
\begin{align*}
|z_{4,\sigma}|&=|x_4|+|z_{4,\sigma}-x_4|=\frac{l}{3}+\frac{(-\nu_2)}{\nu_1}\sigma\leq \frac{l}{3}+\frac{4\gamma}{\nu_1}=  \frac{l}{3}+\frac{4}{50}\,.
\end{align*}
Since
\begin{align*}
\sqrt{1-\sigma^2} \ge \sqrt{1-\frac{16}{2500}\nu_1^2}-\frac{2}{3}>\frac{4}{50}\,,
\end{align*}
the two inequalities in the lines just above imply \eqref{eq: verification_of_second_inclusion}. In particular, the previous elementary arguments show that the points $\{x_i\}_{i=1,2,3,4}$ as in \eqref{eq: division_points} and the corresponding parallelograms $\{Q_i^\sigma\}_{i=1,2,3}$ as in \eqref{eq: division_parallelograms}  satisfy \eqref{properties_of_tilting}(i),(ii). The proof of properties (iii),(iv) therein follow by a Fubini type argument as in Steps 1, 2 of the proof of \cite[Lemma 3.2]{knupfer2011minimal}.

Recalling the notation in \eqref{eq: stupid notation} and \eqref{eq: choice_of_theta_gamma}, for every $s\in[2\gamma,4\gamma]$ let us set 
\begin{equation}\label{eq:length_of_Gamma}
\Gamma^h_s:=\R^h_s\cap B^+_1(\vec{z}_2)\implies 2\ge \L^1(\Gamma^h_s)=2\sqrt{1-(s-z_2)^2}\geq 2\sqrt{1-16\gamma^2}\geq \frac{\sqrt{ 2484}}{25}>1\,.
\end{equation}

Setting also $B^+_{1,\gamma}(\vec{z}_2):=B_1^+(\vec{z}_2)\cap \{2\gamma\leq x_2\leq 4\gamma\}$, by Fubini's theorem 
and \eqref{small_volume_and_perimeter_of_minority} (for $\rho=1$), we have 
\begin{align}\label{upper_horizontal_Fubini_1}
\begin{split}
\int_{2\gamma}^{4\gamma}\Big[\int_{\Gamma^h_s}\chi \,\mathrm{d}\H^1+|D\chi|\big(\Gamma^h_s\big)\Big]\,\mathrm{d}s&\leq \|\chi\|_{L^1(B^+_{1,\gamma}(\vec{z}_2))}+|D\chi|(B^+_{1,\gamma}(\vec{z}_2))\\
&\leq\|\chi\|_{L^1(B_1^+(\vec{z}_2))}+|D\chi|({B_1^+(\vec{z}_2)})\leq 2 c\,,
\end{split}
\end{align}
where the constant $c>0$ will be chosen sufficiently small. In particular, if we set 
\begin{equation}\label{eq: good_set_of_slices}
I_c:=\Big\{s\in [2\gamma,4\gamma] \colon \int_{\Gamma^h_s}\chi \,\mathrm{d}\H^1+|D\chi|\big(\Gamma^h_s\big)\leq \frac{2 c}{\gamma} \Big\}\,,	
\end{equation}	 
\eqref{upper_horizontal_Fubini_1} implies that 
\begin{equation}\label{eq:length_of_good_set_of_slices}
\L^1(I_c)\geq \gamma\,.
\end{equation}
Choosing \[0<c<\frac{\gamma}{4 }=\frac{\nu_1}{200}\,,\]
and by the general slicing properties of $BV$ functions, we have that $\chi|_{\Gamma_s^h}\in BV(\Gamma_s^h;\{0,1\})$ for $\L^1$-a.e. $s\in I_c$, and  
\begin{equation}\label{smallness_in_horizontal_slice}
\int_{\Gamma_s^h} \chi\,\mathrm{d}\H^{1}+|D\chi|({\Gamma_s^h})< \frac{1}{2}\,.
\end{equation} 
In particular, since $|D(\chi|_{\Gamma_s^h})|(\Gamma_s^h)\in \N$,  \eqref{smallness_in_horizontal_slice} implies that $D(\chi|_{\Gamma_s^h})\equiv 0$, \textit{i.e.}, $\chi|_{\Gamma_s^h}$ is constant $\H^1$-a.e. in $\Gamma_s^h$, and since it takes values in $\{0,1\}$, \eqref{eq:length_of_Gamma} and \eqref{smallness_in_horizontal_slice} in turn imply that 
\[\chi\equiv 0\ \H^1\text{-a.e. on } \Gamma_s^h, \text{ for } \L^1\text{-a.e. } s\in I_c\,.\] 
Using Fubini's theorem and \eqref{eq: basic_contradiction_inequality_1}, we obtain
\begin{align*}\label{eq: 2nd_Fubini}
\int_{I_c}\Big(\int_{\Gamma_s^h}|\nabla u|^2\,\mathrm{d}\H^1\big)\,\mathrm{d}s=\int_{I_c}\Big(\int_{\Gamma_s^h}|\nabla u-\chi G|^2\,\mathrm{d}\H^1\big)\,\mathrm{d}s\leq \int_{B_1^+(\vec{z}_2)} |\nabla u-\chi G|^2\,\mathrm{d}x< c|G|^2\mu_\gamma^2\,,
\end{align*} 
and thus, recalling \eqref{eq:length_of_good_set_of_slices}, there exists $\sigma\in I_c\subset[2\gamma,4\gamma]$ such that $\chi\equiv 0 \ \H^1$-a.e. on $\Gamma_\sigma^h$, and  
\begin{equation*}
\int_{\Gamma_\sigma^h}|\nabla u|^2\,\mathrm{d}\H^1\leq \frac{c}{\gamma}|G|^2\mu_\gamma^2\,,
\end{equation*}
which proves \eqref{properties_of_tilting}(iii),(iv). 
\typeout{References}

\end{document}